\useunder{\uline}{\ul}{}
\theoremstyle{plain}
\newtheorem{teo}{}[section]
\newtheorem{prop}[teo]{Proposition}
\newtheorem{rem}[teo]{Remark}
\newtheorem{lem}[teo]{Lemma}
\newtheorem{thm}[teo]{Theorem}
\theoremstyle{definition}
\newtheorem{ex}[teo]{Example}
\newcommand\blfootnote[1]{%
  \begingroup
  \renewcommand\thefootnote{}\footnote{#1}%
  \addtocounter{footnote}{-1}%
  \endgroup
}
\title{Shape of compacta as extension of weak homotopy of finite spaces}
\author{Pedro J. Chocano, Manuel A. Morón and Francisco R. Ruiz del Portal}
\date{}
\begin{document}
\maketitle

\begin{abstract}
We construct a category that classifies compact Hausdorff spaces by their shape and finite topological spaces by their weak homotopy type. %We also give an alternative  and shorter proof of the \textit{General Principle}.
\end{abstract}

\section{Introduction}\label{sec:introduccion}
\blfootnote{2020  Mathematics  Subject  Classification: 55P55,  	54C56, 06A06, 54C56, 18Fxx 	 }
\blfootnote{Keywords: finite topological spaces, poset, compact Hausdorff spaces, shape theory, weak homotopy type.}
\blfootnote{This research is partially supported by Grants PGC2018-098321-B-100 and BES-2016-076669 from Ministerio de Ciencia, Innovación y Universidades (Spain).}
%\blfootnote{\textbf{Declaration:} On behalf of all authors, the corresponding author states that there is no conflict of interest.}

Recent research recognizes the critical role played by the theory of finite topological spaces in several fields of mathematics such as dynamical systems \cite{barmak2011lefschetz,lipinski2019conley, chocano2020coincidence}, group theory \cite{barmak2009automorphism,barmak2020automorphism2,chocano2021onSome} algebraic topology (see \cite{barmak2011algebraic,may1966finite} and the references given there) and geometric topology \cite{mondejar2020reconstruction,chocano2020computational}. It is worth pointing out that important conjectures can be stated in terms of the theory of finite topological spaces. For instance, the Quillen's conjecture (see \cite[Chapter 8]{barmak2011algebraic} for more information). 

Recently, results about the reconstruction of compact metric spaces and its algebraic invariants point out that there are connections between shape theory and the theory of finite topological spaces (see \cite{moron2008connectedness,mondejar2020reconstruction,chocano2020computational}).  Previously, some related results connecting shape theory, and some of its invariants, with finiteness can be found in \cite{sanjurjo1997density,moron2001finite}. Shape theory provides a weaker classification than the one given by the classical homotopy theory of topological spaces. There are several approaches to shape theory, see for example \cite{borsuk1971shape,mardevsic1982shape,cordier1989shape,sanjurjo1992intrinsic}. The intuitive idea about this theory is to enlarge the set of morphisms of the homotopy category of topological spaces $HTop$. To this end, for every topological space it is associated an inverse system of polyhedra. Then the morphisms are given in terms of these inverse systems. The shape category $Sh$ gives the same classification than the homotopy category of polyhedra $HPol$, that is, there is a faithful functor $S:HTop\rightarrow Sh$ that induces an isomorphism between $HPol$ and the full subcategory of $Sh$ restricted to objects of $HPol$ (see \cite[Chapter 1, Section 3]{mardevsic1982shape}). Roughly speaking, we can say that shape theory deals with the study or classification of topological spaces with ``bad'' local properties, while it preserves the homotopy theory for topological spaces with ``good'' local properties. Therefore, one of the most important applications of this theory is to the study of dynamical systems, for a recent account of this we refer the reader to \cite{gabites2008dynamical}.

However, shape theory does not work properly to classify finite topological spaces. Every connected finite topological space is trivial in the shape category. In fact, the shape classification is characterized by the number of connected components of a finite topological space. Moreover, the notion of homotopy can be very restrictive for finite spaces. Indeed, for an important family of finite topological spaces (minimal finite spaces) we have that a homotopy equivalence is the same as a homeomorphism (see \cite{stong1966finite}). In addition, if $X$ is a finite topological space, then the finite barycentric subdivision of $X$, which is an analog of the barycentric subdivision for simplicial complexes, does not have the same homotopy type of $X$. Besides that, finite spaces have the same weak homotopy type of simplicial complexes (see \cite{mccord1966singular}) and hence this notion seems more reasonable to classify them.

%The aim of this short note is to construct, in a simple manner, a category that classifies compact Hausdorff spaces by their shape and finite topological spaces by their weak homotopy type. A similar attempt has been previously done in \cite{prasolov2010quasi} using advanced tools of category theory. The organization of this note is as follows. In Section \ref{sec:preliminares} we set up notation and review some of the standard facts on finite topological spaces. As for prerequisites, the reader is expected to be familiar with shape theory and the notion of pro-category. The best general reference here is \cite{mardevsic1982shape}. Section \ref{sec:construccionCategoria} is devoted to the construction of the desired category.

The aim of this short note is to construct, in a simple manner, a category that classifies compact Hausdorff spaces by their shape and finite topological spaces by their weak homotopy type. We think that this paper also clarifies the results obtained in \cite{prasolov2010quasi}, in the more geometrical class of compact hausdorff spaces. The organization of this note is as follows. In Section \ref{sec:preliminares} we set up notation and review some of the standard facts on finite topological spaces. As for prerequisites, the reader is expected to be familiar with shape theory and the notion of pro-category. The best general reference here is \cite{mardevsic1982shape}. Section \ref{sec:construccionCategoria} is devoted to the construction of the desired category.

\section{Preliminaries}\label{sec:preliminares}

Let $F$ denote the category of finite topological $T_0$-spaces, its objects are all finite topological $T_0$-spaces and its morphisms are continuous maps. From now on we make the assumption: every finite topological space satisfies the $T_0$ separation axiom. The assumption of being a $T_0$-space is crucial. If $X$ is a finite topological $T_1$-space, then $X$ has the discrete topology.  Let $Poset$ denote the category of finite partially ordered sets (posets), its objects are finite posets and its morphisms are order-preserving maps. From the results of \cite{alexandroff1937diskrete}, it can be deduced the following theorem.
\begin{thm}\label{thm:alexandroffTheorem}
$F$ and $Poset$ are isomorphic categories.
\end{thm}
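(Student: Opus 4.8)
The plan is to construct two functors $\mathcal{U}\colon F \to Poset$ and $\mathcal{O}\colon Poset \to F$, to show each is inverse to the other, and to observe that both act as the identity on underlying sets and on underlying set maps, so that the resulting isomorphism of categories is strict (not merely an equivalence).

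I would first define $\mathcal{U}$. Given a finite $T_0$-space $X$ and $x\in X$, let $U_x$ be the intersection of all open sets containing $x$; finiteness makes $U_x$ open, so it is the smallest open neighbourhood of $x$. Put $x\le y$ iff $U_x\subseteq U_y$ (equivalently $x\in U_y$). Reflexivity and transitivity are immediate, and antisymmetry is precisely where $T_0$ enters: $U_x=U_y$ means $x$ and $y$ lie in exactly the same open sets, hence $x=y$. If $f\colon X\to Y$ is continuous then it is order-preserving: $f^{-1}(U_{f(y)})$ is open and contains $y$, so $U_y\subseteq f^{-1}(U_{f(y)})$; thus $x\le y$ gives $x\in U_y$, hence $f(x)\in U_{f(y)}$, i.e. $f(x)\le f(y)$. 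Setting $\mathcal{U}(X)=(X,\le)$ and $\mathcal{U}(f)=f$ yields a functor.

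Next I would define $\mathcal{O}$. Given a finite poset $(P,\le)$, declare $U\subseteq P$ open iff it is a down-set (if $x\in U$ and $y\le x$ then $y\in U$). Unions and intersections of down-sets are down-sets, so this is a topology, the smallest open neighbourhood of $x$ being $\{y:y\le x\}$; and it is $T_0$, since if $x\ne y$ then $x\not\le y$ or $y\not\le x$, and the corresponding minimal neighbourhoods separate the points. An order-preserving $g\colon(P,\le)\to(Q,\le)$ pulls back down-sets to down-sets, hence is continuous, so $\mathcal{O}(P,\le)$ is an object of $F$ and $\mathcal{O}(g)=g$ defines a functor.

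It remains to check $\mathcal{O}\circ\mathcal{U}=\mathrm{id}_F$ and $\mathcal{U}\circ\mathcal{O}=\mathrm{id}_{Poset}$ on objects (on morphisms both composites are visibly the identity). For $\mathcal{U}\circ\mathcal{O}$: the order attached to the down-set topology on $(P,\le)$ has $x$ below $y$ iff $x$ lies in the minimal neighbourhood $\{z:z\le y\}$ of $y$, i.e. iff $x\le y$. For $\mathcal{O}\circ\mathcal{U}$: in $\mathcal{U}(X)$ the down-sets are exactly the subsets $V$ with $U_x\subseteq V$ for every $x\in V$, i.e. the subsets that are unions of minimal neighbourhoods; in a finite (indeed Alexandroff) space these are precisely the open sets of $X$. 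Hence both composites are the identity and $F\cong Poset$. The only points requiring care are the use of $T_0$ for antisymmetry and of finiteness to recover the topology from the minimal neighbourhoods; everything else is bookkeeping.
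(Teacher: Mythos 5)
Your proof is correct and matches the convention the paper itself adopts right after the theorem statement ($U_x$ the minimal open neighbourhood, $x\le y$ iff $U_x\subseteq U_y$, open sets $=$ down-sets); the paper does not prove the theorem but simply cites Alexandroff, and your argument is precisely the standard one being cited. The two places you flag as needing care --- $T_0$ for antisymmetry and finiteness (the Alexandroff property) to make $U_x$ open and to recover the topology from the minimal neighbourhoods --- are indeed the only non-bookkeeping points.
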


Given a finite topological space $X$ and $x\in X$, $U_x$ denotes the intersection of every open set containing $x$. It is clear that $U_x$ is an open set. We say that $x\leq y$ if and only if $U_x\subseteq U_y$. This defines a partial order on $X$. Hence, $U_x=\{ y\in X|y\leq x\}$. The notion of homotopy can be codified in a combinatorial way.
\begin{prop}\label{prop:characterizationHomotopy} Let $f,g:X\rightarrow Y$ be continuous maps between finite topological spaces. Then $f$ is homotopic to $g$ if and only if there exists a sequence of continuous maps $\{f_i:X\rightarrow Y\}_{i=1,...,n}$ such that $f_0=f$, $f_n=g$ and $f_0(x)\leq f_1(x)\geq ...\leq f_{n-1}(x)\geq f_n(x)$ for every $x\in X$.
\end{prop}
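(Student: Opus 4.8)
The plan is to prove both implications by reducing to a single fact about finite spaces: two comparable points of a finite space are joined by a path and, conversely, the path components of a finite space coincide with the equivalence classes of the relation generated by $\le$.

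For the implication ($\Leftarrow$) I would first isolate the sub-lemma: if $\phi,\psi\colon X\to Y$ are continuous with $\phi(x)\le\psi(x)$ for every $x\in X$, then $\phi\simeq\psi$. The witnessing homotopy is $H\colon X\times[0,1]\to Y$ given by $H(x,t)=\phi(x)$ for $t<1$ and $H(x,1)=\psi(x)$; to see it is continuous, note that for $V$ open in $Y$ one has $\psi^{-1}(V)\subseteq\phi^{-1}(V)$, because $\psi(x)\in V$ forces $\phi(x)\in U_{\psi(x)}\subseteq V$, and therefore $H^{-1}(V)=\bigl(\phi^{-1}(V)\times[0,1)\bigr)\cup\bigl(\psi^{-1}(V)\times[0,1]\bigr)$ is open. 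Now, given a sequence $f=f_0,\dots,f_n=g$ with $f_0(x)\le f_1(x)\ge\cdots\ge f_n(x)$, each consecutive pair is comparable, so the sub-lemma (used forwards or backwards in the $[0,1]$-parameter) yields $f_i\simeq f_{i+1}$, and concatenating these homotopies gives $f\simeq g$.

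For the implication ($\Rightarrow$) I would pass to the mapping space. Let $Z$ be the set of continuous maps $X\to Y$, topologised as a subspace of the finite product $Y^X$; it is again a finite $T_0$-space, whose associated order is the pointwise order ($\phi\le\psi$ iff $\phi(x)\le\psi(x)$ for all $x$). Given a homotopy $H\colon X\times[0,1]\to Y$ from $f$ to $g$, put $\widetilde H(t)=H(\cdot,t)$, so that $\widetilde H\colon[0,1]\to Z$ with $\widetilde H(0)=f$ and $\widetilde H(1)=g$. The key point is that $\widetilde H$ is continuous: the minimal open set of $\phi$ in $Y^X$ is $U_\phi=\prod_{x\in X}U_{\phi(x)}$, and if $\widetilde H(t_0)\le\phi$ then $(x,t_0)\in H^{-1}(U_{\phi(x)})$ for every $x$, so—taking the minimum over the finite set $X$ of the radii supplied by continuity of $H$—there is $\delta>0$ with $\widetilde H(t)\le\phi$ whenever $|t-t_0|<\delta$; hence $\widetilde H^{-1}(U_\phi\cap Z)$ is open, and since the $U_\phi\cap Z$ form a basis of $Z$ we conclude that $\widetilde H$ is continuous. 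Thus $f$ and $g$ lie in the same path component of the finite space $Z$. To finish, I would invoke that in a finite space the path components coincide with the classes of the order-generated equivalence relation: comparable elements are joined by a path via the same construction as in the sub-lemma, and each such class $C$ is open since $C=\bigcup_{\phi\in C}U_\phi$, hence also closed, so the connected image of a path lies in a single class. Applying this to $Z$ converts ``$f$ and $g$ in the same path component'' into a sequence $f=f_0,\dots,f_n=g$ of continuous maps with consecutive terms comparable; after possibly repeating the first and/or last term one arranges the comparabilities into the alternating pattern $f_0(x)\le f_1(x)\ge\cdots\ge f_n(x)$ of the statement.

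I expect the only real obstacle to be bookkeeping rather than ideas: making the continuity statements over $X\times[0,1]$ precise, in particular the passage from $H$ to $\widetilde H$, where finiteness of $X$ is exactly what makes the minimum of the chosen radii positive. Everything else is the routine dictionary between finite $T_0$-spaces and finite posets furnished by Theorem~\ref{thm:alexandroffTheorem}.
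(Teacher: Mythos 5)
The paper states Proposition~\ref{prop:characterizationHomotopy} without proof, quoting it as a standard fact about finite spaces (it goes back to Stong), so there is no in-paper argument to compare against; your proof is correct, complete, and is essentially the classical one. The backward implication via the two-stage homotopy $H(x,t)=\phi(x)$ for $t<1$, $H(x,1)=\psi(x)$ is right, and the continuity check correctly reduces to $\psi^{-1}(V)\subseteq\phi^{-1}(V)$ when $\phi\le\psi$ pointwise (with the paper's convention that $U_y$ is the down-set of $y$). The forward implication through the finite mapping space $Z\subseteq Y^X$ with the pointwise specialization order is also sound: your hands-on verification that the adjoint $\widetilde H$ is continuous, taking the minimum over the finite set $X$ of the radii supplied by continuity of $H$, is exactly where finiteness enters, and the identification of path components of a finite space with the components of the comparability graph (each class being open as a union of minimal open sets, hence clopen) is correct. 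The only imprecision is the very last bookkeeping step: to turn an arbitrary zigzag $f_0,\dots,f_n$ with consecutive terms comparable into the alternating pattern $f_0(x)\le f_1(x)\ge\cdots$ you may need to duplicate \emph{interior} terms as well (e.g.\ $f_i\le f_{i+1}\le f_{i+2}$ becomes $f_i\le f_{i+1}\ge f_{i+1}\le f_{i+2}$), not only the first and last; this is trivial and does not affect correctness.
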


A useful way to visualize finite topological spaces is with Hasse diagrams. Let $X$ be a finite topological space (poset). The Hasse diagram of $X$ is a directed graph whose vertices are the points of $X$ and there is an oriented edge from $x$ to $y$ if and only if $x<y$ and there is no $z$ satisfying $x<z<y$. In subsequent Hasse diagrams we assume an upward orientation for simplicity.

A continuous map $f:X\rightarrow Y$ is a weak homotopy equivalence if the induced maps $f_*:\pi_i(X,x)\rightarrow \pi_i(Y,f(x))$ are isomorphisms for all $x\in X$ and all $i\leq 0$. Two topological spaces $X$ and $Y$ are weak homotopy equivalent if there is a sequence of weak homotopy equivalences such that $X \rightarrow X_1 \leftarrow X_2\cdots X_n\rightarrow Y$. This notion plays a central role in the theory of finite topological spaces. 

There is a natural functor $\mathcal{K}:F\rightarrow SimpComp$, where $SimpComp$ denotes the category whose objects are finite simplicial complexes and whose morphisms are simplicial maps. Given a finite topological space $X$, $\mathcal{K}(X)$ is the order complex of $X$, that is, the simplicial complex whose simplices are the non-empty chains of $X$. Let $|\mathcal{K}(X)|$ denotes the geometric realization of $\mathcal{K}(X)$. For every $u\in |\mathcal{K}(X)|$ there is a unique open simplex that contains $u$, let us denote this simplex by $(x_0,x_1,...,x_r)$, where $x_0<x_1<...<x_r$ in $X$. We let $f_X(u)=x_0$. In \cite{mccord1966singular} it is proved the following theorem.
\begin{thm}\label{thm:McCord} For every finite topological space $X$ there exists a weak homotopy equivalence $f_X:|\mathcal{K}(X)|\rightarrow X$.
\end{thm}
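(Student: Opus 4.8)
The plan is to deduce Theorem~\ref{thm:McCord} from a local-to-global criterion for weak homotopy equivalences, applied to the minimal basis $\mathcal{U}=\{U_x : x\in X\}$ of $X$. First I would check that $f_X$ is well defined and continuous: it is well defined because every point of $|\mathcal{K}(X)|$ lies in a unique open simplex, and it is continuous because for each $x\in X$ the set $f_X^{-1}(U_x)$ is a union of open simplices of $|\mathcal{K}(X)|$ (precisely those whose carrier $(x_0<\cdots<x_r)$ has $x_0\le x$), hence open, and the sets $U_x$ form a basis of $X$.

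The key auxiliary result I would invoke is McCord's criterion: if $p\colon E\to B$ is continuous and $\mathcal{V}$ is an open cover of $B$ such that (i) for all $V_1,V_2\in\mathcal{V}$ and all $b\in V_1\cap V_2$ there is $V_3\in\mathcal{V}$ with $b\in V_3\subseteq V_1\cap V_2$, and (ii) the restriction $p|\colon p^{-1}(V)\to V$ is a weak homotopy equivalence for every $V\in\mathcal{V}$, then $p$ is a weak homotopy equivalence. The cover $\mathcal{U}$ satisfies (i) trivially: if $z\in U_x\cap U_y$ then $z\le x$ and $z\le y$, so $z\in U_z\subseteq U_x\cap U_y$.

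It then remains to verify (ii) for $f_X$ and $\mathcal{U}$, i.e. that $f_X|\colon f_X^{-1}(U_x)\to U_x$ is a weak homotopy equivalence for each $x$; for this it suffices to prove that both spaces are contractible, since a map between path-connected spaces with trivial higher homotopy groups is automatically a weak homotopy equivalence. The space $U_x$ has a maximum element $x$, so $\mathrm{id}_{U_x}(y)=y\le x=c_x(y)$ for every $y\in U_x$, whence $\mathrm{id}_{U_x}\simeq c_x$ by Proposition~\ref{prop:characterizationHomotopy} and $U_x$ is contractible. For $f_X^{-1}(U_x)$: as observed above it is the union of those open simplices of $|\mathcal{K}(X)|$ whose carrier meets $U_x$ (a chain has minimum $\le x$ exactly when it contains a vertex $\le x$), i.e. it is the open-star neighbourhood of the subcomplex $\mathcal{K}(U_x)$ in $|\mathcal{K}(X)|$; since $\mathcal{K}(U_x)$ is a full subcomplex (the chains of $X$ with all vertices in $U_x$ are exactly the simplices of $\mathcal{K}(U_x)$), this neighbourhood deformation retracts onto $|\mathcal{K}(U_x)|$ by the usual simplex-wise linear retraction, and $|\mathcal{K}(U_x)|$ is contractible because $\mathcal{K}(U_x)$ is a cone with apex $x$. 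Applying the criterion to $f_X$ and $\mathcal{U}$ yields the theorem.

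The main obstacle is the auxiliary criterion itself: the finite-space bookkeeping above is routine, but the statement that a map which is ``locally a weak equivalence over a basis-like cover'' is globally a weak equivalence carries the real content. I would prove it by the standard obstruction argument: it is enough to solve, up to homotopy rel boundary, every lifting problem for the inclusions $S^{n-1}\hookrightarrow D^n$ against $p$; given such a problem, the image of $D^n$ in $B$ is compact, hence covered by finitely many members of $\mathcal{V}$, so one may subdivide $D^n$ finely enough that each closed cell maps into a single $V\in\mathcal{V}$, and then construct the lift cell by cell by induction over the subdivision, using (ii) to lift over each cell and (i) to reconcile two local lifts on a common cell over a refinement $V_3\subseteq V_1\cap V_2$. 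This patching, and the care needed to keep the homotopies compatible along the induction, is where the argument is delicate; everything else is formal.
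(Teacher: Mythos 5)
Your argument is precisely McCord's original proof, which is all the paper invokes here (it cites \cite{mccord1966singular} and notes only that the proof uses the notion of basis-like open cover): apply the local-to-global criterion for weak homotopy equivalences to the basis-like cover $\{U_x\}_{x\in X}$, with $U_x$ contractible via its maximum and $f_X^{-1}(U_x)$ the open star of the full subcomplex $\mathcal{K}(U_x)$, which is a cone. The details you give check out, so this matches the paper's (cited) proof and nothing needs to be added.
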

In the proof of Theorem \ref{thm:McCord} it is used the notion of basis-like open cover. An open cover $\mathcal{U}$ of $X$ will be called basis-like if whenever $x\in U\cap V$ and $U,V\in \mathcal{U}$, then there exists $W\in \mathcal{U}$ such that $x\in W\subseteq U\cap V$. For every finite topological space $X$ we get that $\mathbb{B}(X)=\{U_x |x\in X\}$ is an open cover basis-like. 

There is also a natural functor $\mathcal{X}:SimpComp\rightarrow F$ and a theorem similar to Theorem \ref{thm:McCord} (see \cite[Theorem 3]{mccord1966singular}). Given a simplicial complex $L$, $\mathcal{X}(L)$ is the face poset of $L$. If $L$ is a simplicial complex, then $\mathcal{K}(\mathcal{X}(L))$ is the barycentric subdivision of $L$. The finite barycentric subdivision of a finite topological space $X$ is defined by $\mathcal{X}(\mathcal{K}(X))$.

Given a topological space $X$, one useful construction to get an inverse system of polyhedra is the nerve of an open cover. Let $\mathcal{U}$ be an open cover of $X$, $N(\mathcal{U})$ denotes the nerve of $\mathcal{U}$. Its vertices are the elements $U$ of $\mathcal{U}$, and vertices $U_0,U_1,...,U_n\in \mathcal{U}$ span a simplex of $N(\mathcal{U})$ whenever $U_0\cap U_1 \cap... \cap U_n\neq \emptyset$. The set of open covers of $X$ is a directed set with the  relation of being a finer cover. Given two open covers $\mathcal{U}$ and $\mathcal{V}$ of $X$, we write $\mathcal{U}\leq \mathcal{V}$ if and only if $\mathcal{V}$ refines $\mathcal{U}$. If $\mathcal{U}\leq \mathcal{V}$, then one associates a simplicial map $p_{\mathcal{U}\mathcal{V}}:N(\mathcal{V})\rightarrow N(\mathcal{U})$ given by $p_{\mathcal{U}\mathcal{V}}(V)=U$, where $V\subseteq U$. This simplicial map is called projection. It is not unique, but every projection belongs to the same homotopy class \cite[Appendix 1, Section 3, Theorem 5]{mardevsic1982shape}. It is worth noting that the nerve of an open cover can be used to reconstruct the homotopy type of the topological space, the so-called nerve theorem (see \cite{borsuk1948imbedding} or \cite{mccord1967homotopy} for a more detailed discussion about open covers and simplicial complexes). Namely, if $\mathcal{U}$ is an open cover of a compact space $X$ such that every non-empty intersection of finitely many sets in $\mathcal{U}$ is contractible then $X$ is homotopy equivalent to $N(\mathcal{U})$.

Given a compact Hausdorff space $X$ one can associate an inverse system $C(X)=$ $(N(\mathcal{U})$, $[p_{\mathcal{U},\mathcal{U}'}], \Lambda)$ in $HPol$, called the \v{C}ech system of $X$, where $\Lambda$ denotes the set of all (normal) open covers ordered by the relation of being a finer cover and $p_{\mathcal{U},\mathcal{U}'}$ is a projection. This inverse system is relevant to define the shape category. As mentioned before, the morphisms of the shape category are given in terms of these inverse systems.

If $X$ is a compact topological space and $\mathcal{U}$ is a finite open cover of $X$, then $\mathcal{U}$ can be seen as a finite poset with the subset relation. From now on, we may treat finite open covers as finite topological spaces without explicit mention. We can associate to $\mathcal{U}$ the nerve of $\mathcal{U}$ or apply the functor $\mathcal{K}$ to get a simplicial complex. Nevertheless, if $X$ is a finite topological space, then the nerve complex does not reconstruct its homotopy type or its weak homotopy type. 
\begin{ex}\label{ex:preliminares} We consider the minimal finite model of the circle, that is, $X=\{a,b,c,d \}$ with its topology given by $\{ \emptyset, X, \{a \}, \{ b\}, \{c,a,b\}, \{d,a,b\},\{a,b\}  \}$. It is well-known that $X$ has the weak homotopy type of a circle and it is not contractible. We have that $U_a=\{ a\}$, $U_b=\{ b\},U_c=\{ c,a,b\}$ and $U_d=\{ d,a,b\}$, and $a,b<c,d$. It is easily seen that every open cover $\mathcal{V}$ of $X$ contains $U_c$ and $U_d$, or $U_c\cup U_d=X$. For the second case we get that $N(\mathcal{V})$ is contractible since $\mathcal{V}$ has a maximum, which is $U_c\cup U_d=X$. We leave it to the reader to verify that  $N(\mathcal{V})$ is contractible for every open cover $\mathcal{V}$ of $X$. Now, we consider the open cover $\mathbb{B}(X)=\{U_a,U_b,U_c,U_d \}$ as a poset with the subset relation. This poset is isomorphic to $X$. It follows easily that $\mathcal{K}(\mathbb{B}(X))$ is a triangulation of the circle and therefore it has the same weak homotopy of $X$.
\end{ex}

If we have two open covers $\mathcal{U}\leq \mathcal{V}$ of a topological space $X$, then we do not have necessarily a well-defined projection using the functor $\mathcal{K}$, that is, there is no continuous map $p_{\mathcal{U},\mathcal{V}}:\mathcal{K}(\mathcal{V})\rightarrow \mathcal{K}(\mathcal{U})$. 
\begin{ex} Let us consider the unit interval $I$ and the open sets $U_1=[0,\frac{2}{3})$ and $U_2=(\frac{1}{3},1]$. We define open covers of $I$ as follows: $\mathcal{U}=\{U_1,U_2\}$ and $\mathcal{V}=\{U_1,U_2,U_1\cap U_2 \}$. It is obvious that $\mathcal{V}$ refines $\mathcal{U}$. We get easily that $\mathcal{K}(\mathcal{U})$ is a  topological space with two points and $\mathcal{K}(\mathcal{V})$ is a triangulation of the unit interval. It is clear that there is no continuous map $p_{\mathcal{U},\mathcal{V}}:\mathcal{K}(\mathcal{V})\rightarrow \mathcal{K}(\mathcal{U})$ such that $p_{\mathcal{U},\mathcal{V}}(U)=V$ where $U\subseteq V$. Moreover, $N(\mathcal{U})$ and $N(\mathcal{V})$ are two triangulations of the unit interval.
\end{ex}

To conclude this section we show that every connected finite topological space has trivial shape.

\begin{prop}\label{prop:trivialShapeFinito} If $X$ is a connected finite topological space, then $X$ has trivial shape. 
\end{prop}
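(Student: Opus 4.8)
The plan is to compute the shape of $X$ directly from an $HPol$-expansion and show that this expansion is pro-isomorphic to a point. Since $X$ is finite it is compact, so its shape is carried by the \v{C}ech system $C(X)=(N(\mathcal{U}),[p_{\mathcal{U}\mathcal{U}'}],\Lambda)$, where now $\Lambda$ must be taken to be the directed set of \emph{normal} open covers of $X$ — for a finite non-discrete space these are not all open covers, in contrast with the compact Hausdorff case. Thus it suffices to prove that $\{X\}$ is the largest element of $\Lambda$: then the subsystem cofinal at $\{X\}$ is the single polyhedron $N(\{X\})$, which is a point, and a directed inverse system with a terminal index is pro-isomorphic to the object sitting over that index, so $X$ has trivial shape.

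The heart of the matter is a rigidity statement for continuous real-valued maps on a finite $T_0$-space. If $f\colon X\to\mathbb{R}$ is continuous and $x\leq y$, then $x\in U_y$, and since $f^{-1}(V)$ is an open neighbourhood of $y$ for every neighbourhood $V$ of $f(y)$, it contains $U_y$; letting $V$ shrink forces $f(x)=f(y)$. Hence $f$ is constant on each order-component of $X$, so when $X$ is connected every continuous $f\colon X\to\mathbb{R}$ is constant. I would isolate this as a small lemma, since it is the only nontrivial input.

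To finish, take any $\mathcal{U}\in\Lambda$ together with a subordinated partition of unity $\{f_U\}_{U\in\mathcal{U}}$. By the lemma each $f_U$ is a constant $c_U$; if $c_U>0$ then $\{x:f_U(x)\neq 0\}=X$, so $\operatorname{supp}f_U=X\subseteq U$ and therefore $U=X$. As $\sum_{U}c_U=1$, at least one $c_U$ is positive, so $X\in\mathcal{U}$. This says precisely that $\{X\}$ refines every normal cover, i.e.\ $\mathcal{U}\leq\{X\}$ for all $\mathcal{U}\in\Lambda$, so $\{X\}=\max\Lambda$ and we conclude as above. (Equivalently, once one knows $X\in\mathcal{U}$, the vertex $X$ is a cone point of $N(\mathcal{U})$, so every nerve $N(\mathcal{U})$ is contractible and $C(X)$ is a pro-object of contractible polyhedra, hence pro-isomorphic to a point.)

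The only delicate point is the bookkeeping about normality: because a finite non-discrete space is not Hausdorff, the \v{C}ech system must be built from normal covers, and one has to recognise that the scarcity of continuous maps to $\mathbb{R}$ makes these covers extremely restrictive. Beyond that, the ingredients used — a directed system with a maximal index is pro-isomorphic to the object over that index, and a simplicial complex with a cone vertex is contractible — are entirely standard.
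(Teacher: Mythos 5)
Your proof is correct, and it rests on the same rigidity phenomenon as the paper's, but it packages it quite differently. The paper proves that every continuous map from a connected finite space into a compact metric space is constant (same argument as your lemma: the minimal open set $U_x$ must land in every ball around $f(x)$), and then concludes rather tersely that ``the only $HPol$-expansion of $X$ is the trivial one.'' You instead stay inside the \v{C}ech machinery: you use the constancy of real-valued maps to show, via subordinated partitions of unity, that every \emph{normal} cover of a connected finite space must contain $X$ itself as a member, so $\{X\}$ is the maximum of the index set and the \v{C}ech system collapses to a point. This is a genuinely different (and more explicit) route to the conclusion: it makes precise exactly why the expansion trivializes, namely the scarcity of normal covers on a non-Hausdorff finite space, a point the paper's proof leaves implicit. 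The cost is that you must invoke Morita's theorem that the \v{C}ech system over normal coverings is an $HPol$-expansion of an arbitrary topological space (not just a compact Hausdorff one); that is available in \cite[Appendix 1]{mardevsic1982shape}, so the dependence is legitimate, but it is a heavier external input than the paper uses. All the individual steps check out: the identification of normal covers with covers admitting subordinated partitions of unity is unproblematic for finite covers, the deduction $\operatorname{supp} f_U = X \subseteq U$ from $c_U>0$ is right, and both of your closing observations (a system with a terminal index is pro-isomorphic to that term; a nerve with the cone vertex $X$ is contractible) are standard and correctly applied.
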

\begin{proof}
The result is an immediate consequence of the following: every continuous map from a connected finite topological space to a non-trivial compact metric space is the constant map. We prove the last assertion. Suppose $M$ is a compact metric space. If $f:X\rightarrow M$ is continuous at $x\in X$, then for every open neighborhood $V$ of $f(x)$ there exists an open neighborhood  $U$ of $x$ satisfying $f(U)\subseteq V$. We consider a decreasing sequence of positive values $\{\epsilon_n\}_{n\in \mathbb{N}}$ satisfying that $\lim_{n\rightarrow \infty}\epsilon_n=0$. Since $U_x$ is a minimal open neighborhood of $x$ in $X$, it follows that $f(U_x)\subseteq \mathcal{B}(f(x),\epsilon_n)$ for every $n\in \mathbb{N}$. Here $\mathcal{B}(f(x),\epsilon_n)$ denotes the open ball of radius $\epsilon_n$ and center $f(x)$. We have $\bigcap_{n\in \mathbb{N}}\mathcal{B}(x,\epsilon_n)=\{ f(x)\}$ and then $f(U_x)=f(x)$. Therefore, $f$ is a locally constant map defined on a connected space, which implies that it is constant. From this, we get that the only $HPol$-expansion of $X$ is the trivial one (for a deep discussion about the notion of $HPol$-expansion see \cite[Chapter 1, Section 2]{mardevsic1982shape}). Thus, $X$ has trivial shape.
\end{proof}
\begin{rem} From Proposition \ref{prop:trivialShapeFinito}, it follows easily that the shape category classifies a finite topological space by its number of connected components.
\end{rem}

\section{Category SW}\label{sec:construccionCategoria}

Firstly, we construct an auxiliary category $M$. The objects of $M$ are finite topological spaces. Given two finite topological spaces $X$ and $Y$ we define $M(X,Y)=\{f:|\mathcal{K}(X)|\rightarrow |\mathcal{K}(Y)|| f$ is a continuous map$\}$. It is easy to check that $M$ is a category. Let $HM$ denote the homotopy category of $M$, that is, the objects of $HM$ are the objects of $M$ and the morphisms of $HM$ are the homotopy classes of the morphisms of $M$. Again, it is trivial to check that $HM$ is a category.
\begin{rem} It is easily seen that $HM$ is a category that classifies finite topological spaces by their weak homotopy type.
\end{rem}

Given a compact space $X$ we will associate to $X$ an object of pro-$HM$. We consider $Cov(X)=\{ \mathcal{U}$ is a finite open cover of $X|$ $\mathcal{U}$ is basis-like$\}$. We write $\mathcal{U}\geq_C\mathcal{V}$ if and only if $\mathcal{U},\mathcal{V}\in Cov(X)$, $\mathcal{U}$ refines $\mathcal{V}$ and there exists a continuous map $p_{\mathcal{V},\mathcal{U}}:\mathcal{U}\rightarrow \mathcal{V}$ such that for every $U\in \mathcal{U}$ we get that $p_{\mathcal{V},\mathcal{U}}(U)\in \mathcal{V}$  contains $U$. We will omit the subscript of $p_{\mathcal{V},\mathcal{U}}$ when no confusion can arise.

\begin{ex} Let us consider $X=[0,1]\times [0,1]\subset \mathbb{R}^2$. Let $\mathcal{U}$ denote the open cover given by $U_1=[0,1]\times [0,\frac{3}{4})$ and $U_2=[0,1]\times (\frac{1}{4},1]$. Let $\mathcal{V}$ denote the open cover given by $U_1$, $U_2$ and $U_3=U_1\cap U_2$. It follows immediately that $\mathcal{V}\ngeq_C \mathcal{U}$ because there is no continuous map $p:\mathcal{V}\rightarrow \mathcal{U}$ satisfying that $U\subseteq p(U)$. In Figure \ref{fig:ejemploCoverNoRefina} we present a schematic representation of the situation described above.
\begin{figure}[h]
\centering 
\includegraphics[scale=1]{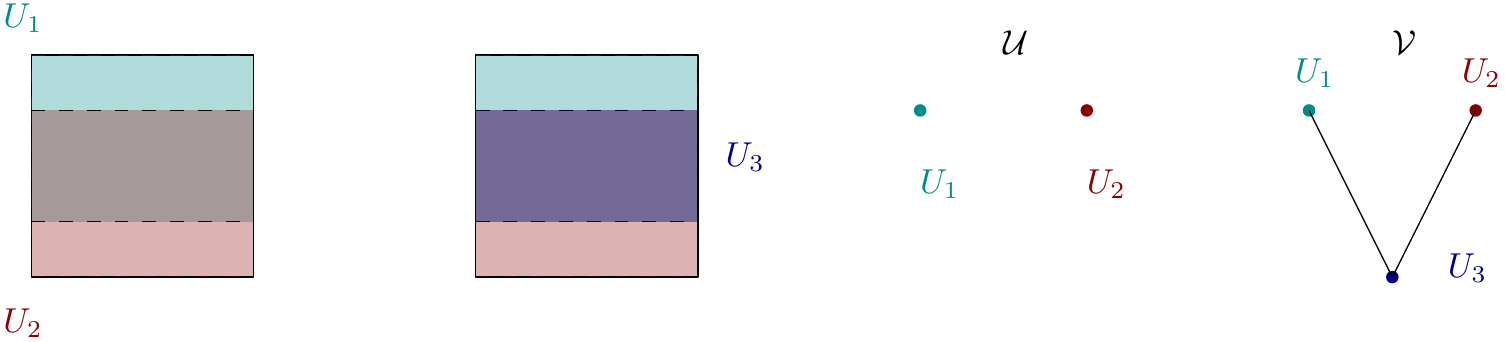}\caption{$X$, $\mathcal{U}$, $\mathcal{V}$ and Hasse diagrams of $\mathcal{U}$ and $\mathcal{V}$.}\label{fig:ejemploCoverNoRefina}
\end{figure}
\end{ex}

\begin{rem}\label{rem:coverhomeo} Let $X$ be a finite topological space and $\mathbb{B}(X)=\{U_x|x\in X \}$. Then $\mathbb{B}(X)\in Cov(X)$. In addition, a trivial verification shows that $\mathbb{B}(X)$ is homeomorphic to $X$. We have that $\varphi:\mathbb{B}(X)\rightarrow X$ given by $\varphi(U)=max(U)$ is a homeomorphism, where $max(\cdot)$ denotes the maximum of a poset.
\end{rem}

We prove some properties of the relation $\leq_C$ defined on $Cov(X)$ when $X$ is a finite topological space.

\begin{prop}\label{prop:cofinal} Let $X$ be a finite topological space. Then $\mathbb{B}(X)\geq_C \mathcal{U}$ for every $\mathcal{U}\in Cov(X)$. 
\end{prop}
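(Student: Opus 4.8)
The plan is to check directly the two conditions that define $\mathbb{B}(X)\geq_C\mathcal{U}$: first that $\mathbb{B}(X)$ refines $\mathcal{U}$, and second that there is a continuous map $p\colon\mathbb{B}(X)\to\mathcal{U}$ with $U\subseteq p(U)$ for every $U\in\mathbb{B}(X)$. Both covers lie in $Cov(X)$ by hypothesis and by Remark \ref{rem:coverhomeo}, and by that same remark $U_x\mapsto x$ identifies $\mathbb{B}(X)$ with $X$ as posets, so I will describe $p$ on the points $U_x$.

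The crucial preliminary step is to show that, because $\mathcal{U}$ is basis-like and finite, for each $x\in X$ the family $\mathcal{U}_x:=\{V\in\mathcal{U}\mid x\in V\}$ has a minimum for inclusion, which I call $\hat U_x$. Indeed $\mathcal{U}_x$ is nonempty since $\mathcal{U}$ is a cover, and given $V,W\in\mathcal{U}_x$ the basis-like property produces $W'\in\mathcal{U}$ with $x\in W'\subseteq V\cap W$, so $W'\in\mathcal{U}_x$ is a common lower bound; in a finite poset in which every pair of elements has a lower bound there is a least element, so $\hat U_x$ exists. Since $\hat U_x$ is an open set containing $x$ and $U_x$ is the smallest such set, $U_x\subseteq\hat U_x$; in particular $\mathbb{B}(X)$ refines $\mathcal{U}$, which is the first condition.

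For the second condition, define $p\colon\mathbb{B}(X)\to\mathcal{U}$ by $p(U_x)=\hat U_x$, which is well defined because $X$ is $T_0$. It is order-preserving, hence continuous by Theorem \ref{thm:alexandroffTheorem}: if $U_x\subseteq U_y$, i.e. $x\leq y$ in $X$, then $x\in U_y\subseteq\hat U_y$, so $\hat U_y\in\mathcal{U}_x$ and therefore $\hat U_x\subseteq\hat U_y$ by minimality of $\hat U_x$. Since $U_x\subseteq\hat U_x=p(U_x)$ for every $x$, this $p$ is the required projection and $\mathbb{B}(X)\geq_C\mathcal{U}$.

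The argument is essentially formal; the one place that needs care is the existence of the minimal set $\hat U_x$, and it is precisely there that the basis-like hypothesis on $\mathcal{U}$ is used. Everything else is routine verification.
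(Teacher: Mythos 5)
Your proof is correct, and it constructs the same projection as the paper: $p(U_x)$ is the minimum element of $\mathcal{U}$ containing $x$ (equivalently, containing $U_x$, since for open $V$ one has $x\in V$ iff $U_x\subseteq V$). Where you differ is in how the key step --- the existence of that minimum --- is established. The paper argues by contradiction: it assumes two incomparable minimal elements $U_1,U_2$ of $\mathcal{U}$ over $U_x$, writes $U_1\cap U_2$ as a union of members of $\mathcal{U}$ via the basis-like property, decomposes each of those as a union of sets $U_y$, and extracts a member of $\mathcal{U}$ strictly between $U_x$ and the $U_i$; it then repeats essentially the same contradiction argument a second time to prove that $p$ is order-preserving. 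You instead observe directly that the basis-like property gives any two elements of $\mathcal{U}_x=\{V\in\mathcal{U}\mid x\in V\}$ a common lower bound inside $\mathcal{U}_x$, and that a finite poset with pairwise lower bounds has a least element; monotonicity then falls out in one line from minimality ($x\in\hat U_y$ forces $\hat U_x\subseteq\hat U_y$). Your route is shorter, avoids both contradiction arguments, and isolates cleanly the single point where the basis-like hypothesis enters; the paper's version buys nothing extra here. The only detail worth making explicit is the standard fact you invoke about finite posets (a minimal element of $\mathcal{U}_x$ must be the minimum, since any common lower bound of it and another element coincides with it by minimality), but that is routine.
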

\begin{proof}
Let $\mathcal{U}\in Cov(X)$. It is trivial to check that $\mathbb{B}(X)$ refines $\mathcal{U}$, in fact, $\mathbb{B}(X)$ is a basis for the topology of $X$. We define $p:\mathbb{B}(X)\rightarrow \mathcal{U}$ as follows: $p(U_x)$ is the minimum open set in $\mathcal{U}$ containing $U_x$. We prove  that $p$ is well-defined, i.e., $p(U_x)$ exists. Since $\mathbb{B}(X)$ refines $\mathcal{U}$, it follows that there exists at least one open set in $\mathcal{U}$ that contains $U_x$. If $U_x\in \mathcal{U}$, then $p(U_x)=U_x$ and $p$ is well-defined. If $U_x\notin \mathcal{U}$, then we argue by contradiction. Suppose that there is no minimum open set containing $U_x$, that is, there are two open sets $U_1, U_2\in \mathcal{U}$ that are not comparable satisfying $U_x\subseteq U_i$  for $i=1,2$ and the property that there is no $W\in \mathcal{U}$ such that $U_x\subseteq W\subseteq U_i$  for $i=1,2$. We get that $U_1\cap U_2$ is non-empty because it contains $U_x$. By the property of being basis-like, we have $U_1\cap U_2=\bigcup_{t\in T}U_t$ for some $T$, where $U_t\in \mathcal{U}$ for every $t\in T$. On the other hand, $\mathbb{B}(X)$ is a basis for the topology in $X$, so $U_t=\bigcup_{y\in I_t}U_y$  for every $t\in T$ and some $I_t\subseteq X$. We get that  $U_x\subseteq \bigcup_{t\in T}\bigcup_{y\in I_t} U_y$, which means that $x\in U_y$ for some $y \in I_t$ and $t\in T$. Hence, $U_x\subseteq U_y\subseteq U_t\in \mathcal{U}$, but $U_t\subseteq U_i$ for $i=1,2$, which leads to contradiction. 

We have shown that $p$ is well-defined, it remains to show the continuity of it. It suffices to check that it is order-preserving. Again, we argue by contradiction. Suppose that $U_x\subseteq U_y$ and $p(U_x)\nsubseteq p(U_y)$. By hypothesis, $U_x\subseteq U_y\subseteq p(U_y)$ so $p(U_x)\cap p(U_y)\neq \emptyset$. We repeat the same argument used before to get the contradiction. We have that $p(U_x)\cap p(U_y)=\bigcup_{t\in T} U_t$ for some $T$, where $U_t\in \mathcal{U}$ for every $t\in T$. On the other hand, $U_t=\bigcup_{z\in I_t} U_z$ for some $I_t\subseteq X$, where $U_z\in \mathbb{B}(X)$ for every $z\in I_t$ and every $t\in T$. We get $U_x\subseteq p(U_x)\cap p(U_y)=\bigcup_{t\in T}\bigcup_{z\in I_t}U_z$, therefore,  $U_x\subseteq U_z\subset U_t \subseteq p(U_x)$ for some $t\in T$, which leads to a contradiction with the minimality of $p(U_x)$.
\end{proof}

\begin{prop}\label{prop:cofinal2} Let $X$ be a finite topological space. If there exists $\mathcal{U}\in Cov(X)$ with $\mathcal{U}\geq_C \mathbb{B}(X)$, then $\mathcal{U}$ is homotopy equivalent to $\mathbb{B}(X)$.  
\end{prop}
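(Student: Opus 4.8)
The plan is to produce two order-preserving maps between $\mathcal{U}$ and $\mathbb{B}(X)$, one coming from each side of the hypothesis, and to verify that they are mutually homotopy-inverse by means of the combinatorial criterion of Proposition~\ref{prop:characterizationHomotopy}. The point is that $\geq_C$ is designed precisely so that a refinement with a ``set-increasing'' retraction map is available in each direction, and composing such maps always dominates the identity pointwise.

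First I would record the two maps explicitly. Since $X$ is a finite topological space, Proposition~\ref{prop:cofinal} gives $\mathbb{B}(X)\geq_C\mathcal{U}$, hence a continuous (equivalently, order-preserving) map $p\colon\mathbb{B}(X)\to\mathcal{U}$ with $U\subseteq p(U)$ for every $U\in\mathbb{B}(X)$. The hypothesis $\mathcal{U}\geq_C\mathbb{B}(X)$ provides, in turn, a continuous map $q\colon\mathcal{U}\to\mathbb{B}(X)$ with $U\subseteq q(U)$ for every $U\in\mathcal{U}$. Both $\mathbb{B}(X)$ and $\mathcal{U}$ are finite posets under inclusion, so ``continuous'' and ``order-preserving'' coincide and the compositions below are again continuous.

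Next I would analyze the two composites. For $U\in\mathcal{U}$ one has $U\subseteq q(U)\subseteq p(q(U))$, so $\mathrm{id}_{\mathcal{U}}(U)\leq(p\circ q)(U)$ in the poset $\mathcal{U}$ for every $U$; symmetrically, for $U\in\mathbb{B}(X)$ one has $U\subseteq p(U)\subseteq q(p(U))$, so $\mathrm{id}_{\mathbb{B}(X)}(U)\leq(q\circ p)(U)$ for every $U$. Since $\mathrm{id}_{\mathcal{U}}$ and $p\circ q$ are continuous and pointwise comparable, Proposition~\ref{prop:characterizationHomotopy} (applied with the one-step sequence $f_0=\mathrm{id}_{\mathcal{U}}\leq f_1=p\circ q$) yields $p\circ q\simeq\mathrm{id}_{\mathcal{U}}$, and the same argument gives $q\circ p\simeq\mathrm{id}_{\mathbb{B}(X)}$. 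Hence $p$ and $q$ are mutually inverse homotopy equivalences, and $\mathcal{U}$ is homotopy equivalent to $\mathbb{B}(X)$. The argument is essentially formal once the two maps are in hand; the only points requiring a little care are orienting the relation $\geq_C$ correctly, so that $p$ and $q$ really do go in opposite directions and each enlarges sets under inclusion, and observing that Proposition~\ref{prop:characterizationHomotopy} already applies when two continuous maps differ by a single comparability step. I do not anticipate a genuine obstacle.
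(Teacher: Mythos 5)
Your proof is correct and follows essentially the same route as the paper: obtain the map $\mathbb{B}(X)\to\mathcal{U}$ from Proposition~\ref{prop:cofinal}, the map $\mathcal{U}\to\mathbb{B}(X)$ from the hypothesis, observe that each composite dominates the identity because both maps enlarge sets under inclusion, and conclude via Proposition~\ref{prop:characterizationHomotopy}. No issues.
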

\begin{proof}
We use Proposition \ref{prop:characterizationHomotopy}. Then it suffices to show that $p_{\mathbb{B},\mathcal{U}}\circ p_{\mathcal{U},\mathbb{B}}\geq id_{\mathcal{U}}$ and $p_{\mathcal{U},\mathbb{B}}\circ p_{\mathbb{B},\mathcal{U}}\geq id_{\mathbb{B}}$. By construction, for every $U\in \mathcal{U}$ we get  that $id(U)=U\subseteq p_{\mathbb{B},\mathcal{U}}( p_{\mathcal{U},\mathbb{B}}(U))$ because $U\subseteq p_{\mathcal{U},\mathbb{B}}(U)\subseteq  p_{\mathbb{B},\mathcal{U}}( p_{\mathcal{U},\mathbb{B}}(U))$. We apply the same argument to show $p_{\mathcal{U},\mathbb{B}}\circ p_{\mathbb{B},\mathcal{U}}\geq id_{\mathbb{B}}$. Thus, $\mathbb{B}(X)$ and $\mathcal{U}$ have the same homotopy type.
\end{proof}

\begin{prop}\label{prop:directedSetFinite} Let $X$ be a finite topological space. Then $Cov(X)$ with the relation $\leq_C$ is a directed set.
\end{prop}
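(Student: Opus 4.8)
The plan is to check directly the three ingredients in the definition of a directed set: that $Cov(X)$ is nonempty, that $\leq_C$ is a preorder on it, and that every pair of elements admits a common upper bound. The first is immediate from Remark \ref{rem:coverhomeo}, which gives $\mathbb{B}(X)\in Cov(X)$. The third is essentially already done in Proposition \ref{prop:cofinal}, so the only thing left to write carefully is that $\leq_C$ really is a preorder.

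First I would verify reflexivity and transitivity of $\leq_C$. Reflexivity: for $\mathcal{U}\in Cov(X)$, the cover $\mathcal{U}$ refines itself, and $\mathrm{id}_{\mathcal{U}}\colon\mathcal{U}\to\mathcal{U}$ is order-preserving, hence continuous, with $U\subseteq\mathrm{id}_{\mathcal{U}}(U)=U$ for all $U\in\mathcal{U}$; thus $\mathcal{U}\geq_C\mathcal{U}$. Transitivity: if $\mathcal{W}\geq_C\mathcal{V}$ and $\mathcal{V}\geq_C\mathcal{U}$, then ``refines'' is transitive so $\mathcal{W}$ refines $\mathcal{U}$, and the composite $p_{\mathcal{U},\mathcal{V}}\circ p_{\mathcal{V},\mathcal{W}}\colon\mathcal{W}\to\mathcal{U}$ is continuous and satisfies $W\subseteq p_{\mathcal{V},\mathcal{W}}(W)\subseteq p_{\mathcal{U},\mathcal{V}}\bigl(p_{\mathcal{V},\mathcal{W}}(W)\bigr)$ for every $W\in\mathcal{W}$, which is exactly the condition witnessing $\mathcal{W}\geq_C\mathcal{U}$. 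I would remark here that $\leq_C$ need not be antisymmetric, but that a directed set in the sense used for pro-categories (cf.\ \cite{mardevsic1982shape}) is only required to be a preorder, so this is no obstruction.

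It then remains to prove directedness. Given $\mathcal{U},\mathcal{V}\in Cov(X)$, Proposition \ref{prop:cofinal} yields $\mathbb{B}(X)\geq_C\mathcal{U}$ and $\mathbb{B}(X)\geq_C\mathcal{V}$ simultaneously, so $\mathbb{B}(X)$ is a common upper bound; in fact it is a maximum of $\bigl(Cov(X),\leq_C\bigr)$. This completes the proof.

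I do not expect a genuine obstacle. The only place needing a little care is the composition step in transitivity, where one must chain the two containment conditions so that the composite projection still lies above each $W$; and one must resist the temptation to produce a common upper bound ``by hand'' as an arbitrary common refinement, since the second example in the paper shows that refining a basis-like cover by adjoining intersections can destroy the existence of the required continuous projection. Leaning on Proposition \ref{prop:cofinal} sidesteps this entirely.
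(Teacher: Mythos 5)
Your proof is correct and follows essentially the same route as the paper's: reflexivity via the identity, transitivity by composing the two projections and chaining the containments, and directedness by invoking Proposition \ref{prop:cofinal} to exhibit $\mathbb{B}(X)$ as a common upper bound. You simply spell out the details (and the caveat about antisymmetry) that the paper leaves as ``trivial.''
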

\begin{proof}
The reflexive property follows trivially.  For the transitive property we only need to compose the continuous maps given by the relations $\mathcal{U}\geq_C\mathcal{V}$ and $\mathcal{V}\geq_C \mathcal{W}$, where $\mathcal{U}, \mathcal{V}, \mathcal{W}\in Cov(X)$. By Proposition \ref{prop:cofinal} it is obvious that for any $\mathcal{U},\mathcal{V}\in Cov(X)$ there exists $\mathcal{W}\in Cov(X)$ such that $\mathcal{W}\geq_C\mathcal{U}$ and  $\mathcal{W}\geq_C\mathcal{V}$.
\end{proof}

If $X$ is a non-finite compact topological space, then we define $\overline{Cov}(X)=\{ \mathcal{U}$ is a finite open cover of $X|$ for every $U,V\in \mathcal{U}$, $U\cap V\in \mathcal{U}\}$. We have trivially that $\overline{Cov}(X)\subseteq Cov(X)$. In fact, $\overline{Cov}(X)$ is cofinal in $Cov(X)$ with the usual relation of refinement. % that is to say, $\mathcal{U}$ refines $\mathcal{V}$ if and only if for every open set $U\in \mathcal{U}$, there exists $V\in \mathcal{V}$ such that $U\subseteq V$.

\begin{lem}\label{lem:direcetd}  If $X$ is a compact Hausdorff space, then $(\overline{Cov}(X),\geq_C)$ is a directed set.
\end{lem}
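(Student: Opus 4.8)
The plan is to check the three requirements for $(\overline{Cov}(X),\geq_C)$ to be a directed set — reflexivity, transitivity, and existence of common upper bounds — the set being nonempty because $\{X\}\in\overline{Cov}(X)$. Reflexivity holds with $p_{\mathcal{U},\mathcal{U}}=\mathrm{id}_{\mathcal{U}}$; here one only uses that $\overline{Cov}(X)\subseteq Cov(X)$, i.e. that a finite cover closed under intersections is basis-like. Transitivity is obtained exactly as in Proposition \ref{prop:directedSetFinite}: refinement is transitive, and if $\mathcal{U}\geq_C\mathcal{V}$ via $p_{\mathcal{V},\mathcal{U}}$ and $\mathcal{V}\geq_C\mathcal{W}$ via $p_{\mathcal{W},\mathcal{V}}$, then $p_{\mathcal{W},\mathcal{V}}\circ p_{\mathcal{V},\mathcal{U}}\colon\mathcal{U}\to\mathcal{W}$ is order-preserving and satisfies $U\subseteq p_{\mathcal{V},\mathcal{U}}(U)\subseteq p_{\mathcal{W},\mathcal{V}}(p_{\mathcal{V},\mathcal{U}}(U))$ for all $U\in\mathcal{U}$.

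The real content is the common upper bound. Given $\mathcal{U},\mathcal{V}\in\overline{Cov}(X)$, I would take the common refinement $\mathcal{W}=\{\,U\cap V\mid U\in\mathcal{U},\ V\in\mathcal{V}\,\}$ (keeping only the non-empty intersections, if the convention requires covers to omit $\emptyset$). This is a finite open cover of $X$ refining both $\mathcal{U}$ and $\mathcal{V}$, and it lies in $\overline{Cov}(X)$: from $(U_1\cap V_1)\cap(U_2\cap V_2)=(U_1\cap U_2)\cap(V_1\cap V_2)$ together with the closure of $\mathcal{U}$ and $\mathcal{V}$ under intersections, $\mathcal{W}$ is closed under intersections as well.

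It remains to build the order-preserving enlarging map $p_{\mathcal{U},\mathcal{W}}\colon\mathcal{W}\to\mathcal{U}$ with $W\subseteq p_{\mathcal{U},\mathcal{W}}(W)$ for every $W\in\mathcal{W}$, and symmetrically $p_{\mathcal{V},\mathcal{W}}$. This is the step where closure under intersections is used essentially: for each $W\in\mathcal{W}$ the family $\{U\in\mathcal{U}\mid W\subseteq U\}$ is non-empty and finite, hence its intersection is again a member of $\mathcal{U}$ and is the minimum element of $\mathcal{U}$ containing $W$; define $p_{\mathcal{U},\mathcal{W}}(W)$ to be this minimum. If $W_1\subseteq W_2$ then $\{U\in\mathcal{U}\mid W_2\subseteq U\}\subseteq\{U\in\mathcal{U}\mid W_1\subseteq U\}$, so intersecting over the smaller family gives $p_{\mathcal{U},\mathcal{W}}(W_1)\subseteq p_{\mathcal{U},\mathcal{W}}(W_2)$; thus $p_{\mathcal{U},\mathcal{W}}$ is order-preserving, i.e. continuous as a map of finite spaces. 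Hence $\mathcal{W}\geq_C\mathcal{U}$, and the same construction yields $\mathcal{W}\geq_C\mathcal{V}$, which finishes the proof.

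I do not expect a serious obstacle here. The only subtlety is that, in contrast with Proposition \ref{prop:cofinal}, where the enlarging map had to be extracted from a merely basis-like cover by a more delicate argument, the closure of $\mathcal{U}$ under intersections makes the minimal element of $\mathcal{U}$ containing $W$ exist automatically; so the crux is simply to choose the common refinement \emph{inside} $\overline{Cov}(X)$ — rather than an arbitrary basis-like common refinement — before defining the maps.
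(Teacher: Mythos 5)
Your proof is correct and follows essentially the same route as the paper: the key map $p_{\mathcal{U},\mathcal{W}}(W)=\bigcap\{U\in\mathcal{U}\mid W\subseteq U\}$, whose well-definedness rests on $\mathcal{U}$ being closed under intersections, is exactly the paper's construction, as is the monotonicity argument. The only (immaterial) difference is that you take the common refinement to be $\{U\cap V\}$ directly, whereas the paper starts from an arbitrary common refinement and then closes it under intersections.
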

\begin{proof}
%Suppose $X$ is a compact space. For every $\mathcal{U}\in Cov(X)$ we have the reflexive property, $\mathcal{U}\geq_C\mathcal{U}$, it suffices to take $p:\mathcal{U}\rightarrow \mathcal{U}$ as the identity map. The transitive property also holds trivially, we only need to compose the continuous maps given by the relations $\mathcal{U}\geq_C\mathcal{V}$ and $\mathcal{V}\geq_C \mathcal{W}$, where $\mathcal{U}, \mathcal{V}, \mathcal{W}\in Cov(X)$.

%The results for the finite case can be deduced from Proposition \ref{prop:cofinal}. Thus, let us check the properties for a compact Hausdorff space $X$. %It is evident that  $(\overline{Cov}(X),\geq_C)$ is a preordered set because $\overline{Cov}(X)\subset Cov(X)$.
We prove that $(\overline{Cov}(X),\geq_C)$ is a directed set. For any $\mathcal{U}$, $\mathcal{V}\in \overline{Cov}(X)$ there exists a finite open cover $\mathcal{W}$ that refines $\mathcal{U}$ and $\mathcal{V}$ in the usual sense \cite [Appendix 1, Section 3]{mardevsic1982shape}. Let $\overline{\mathcal{W}}$ denote the open cover $\mathcal{W}$ with all the possible intersections of open sets from it. Then $\overline{\mathcal{W}}$ clearly refines $\mathcal{W}$ and $\overline{\mathcal{W}}\in \overline{Cov}(X)$. We define $p:\overline{\mathcal{W}}\rightarrow \mathcal{U}$ given by 
$$p(W)=\bigcap_{V\in U_W} V, \ \text{where}  \ U_W=\{V\in \mathcal{U}|W\subseteq V\}. $$
The map $p$ is well-defined because $\mathcal{U}\in \overline{Cov}(X)$, which means that the intersection of elements of $\mathcal{U}$ is again an element of $\mathcal{U}$. Therefore, $p(W)\in \mathcal{U}$ for every $W\in \overline{\mathcal{W}}$. We prove the continuity of $p$. By hypothesis $W'\subseteq W$ implies $U_{W}\subseteq U_{W'}$ and therefore $p(W')\subseteq p(W)$. Thus, $\overline{\mathcal{W}}\geq_C \mathcal{U}$ and we only need to repeat the same argument to get $\overline{\mathcal{W}}\geq_C \mathcal{V}$. 

It is clear that $\leq_C$ satisfies the reflexive property. The proof to show that $\leq_C$ also satisfies the transitivity property is straightforward. 
\end{proof}

With the usual notion of refinement we can have the following situation. Given a finite topological space $X$ there is a finite cover $\mathcal{U}\in Cov(X)$ satisfying that $\mathcal{U}$ is cofinal in $Cov(X)$ and $\mathcal{U}\neq \mathbb{B}(X)$. On the other hand, $\mathbb{B}(X)$ is also cofinal in $Cov(X)$. We have that $\mathcal{U}$ and $\mathbb{B}(X)$ do not have necessarily the same homotopy type or the same weak homotopy type. Therefore, if we consider a cofinal element in $Cov(X)$, then we make a choice that it is not unique.
In the following example we illustrate this situation.

\begin{ex}\label{example:s1covers} We consider the finite topological space introduced in Example \ref{ex:preliminares} and $\mathbb{B}(X)\in Cov(X)$. We get that $\mathbb{B}(X)$ is homeomorphic to $X$, see Remark \ref{rem:coverhomeo}. We consider $\mathcal{U}=\{ \{ a,b\}, \{a,b,c \},\{a,b,d \} \}\in Cov(X)$. Since $\mathcal{U}$ contains a minimum, which is $\{ a,b\}$, it follows that $\mathcal{U}$ has the same homotopy type of a point. We have that $|\mathcal{K}(X)|=|\mathcal{K}(\mathbb{B}(X))|=S^{1}$, which implies that $\mathbb{B}(X)$ is not contractible. It is not difficult to get that $\mathbb{B}(X)$ refines $\mathcal{U}$ and $\mathcal{U}$ refines $\mathbb{B}(X)$ in the usual sense. Moreover, $\mathbb{B}(X)\geq_C\mathcal{U}$ considering $p:\mathbb{B}(X)\rightarrow \mathcal{U}$ given by $p(\{ a,b,c\})=\{ a,b,c\}$, $p(\{ a,b,d\})=\{ a,b,d\}$ and $p(a)=p(b)=\{a,b\}$ (or see Proposition \ref{prop:cofinal}). There is no continuous map $q:\mathcal{U}\rightarrow \mathbb{B}(X)$ satisfying that $U\subseteq q(U)$ for every $U\in \mathbb{B}(X)$. If it exists, then $q(\{ a,b,c\})=\{ a,b,c\}$ and $q(\{ a,b,d\})=\{ a,b,d\}$. We only have two options for $q(\{ a,b\})$. If $q(\{ a,b\})=q(\{ a,b,c\})$, then $q$ is not continuous since $\{ a,b,d\}>\{ a,b\}$ but $q(\{ a,b,d\})=\{ a,b,d\}\ngeq \{a,b,c \}=q(\{ a,b\})$. Similarly, we get a contradiction with the continuity of $q$ for the other case. In Figure \ref{fig:esquemaNorefina} we have the Hasse diagrams of $X$, $\mathbb{B}(X)$ and $\mathcal{U}$.
\begin{figure}[h]
\centering
\includegraphics[scale=1]{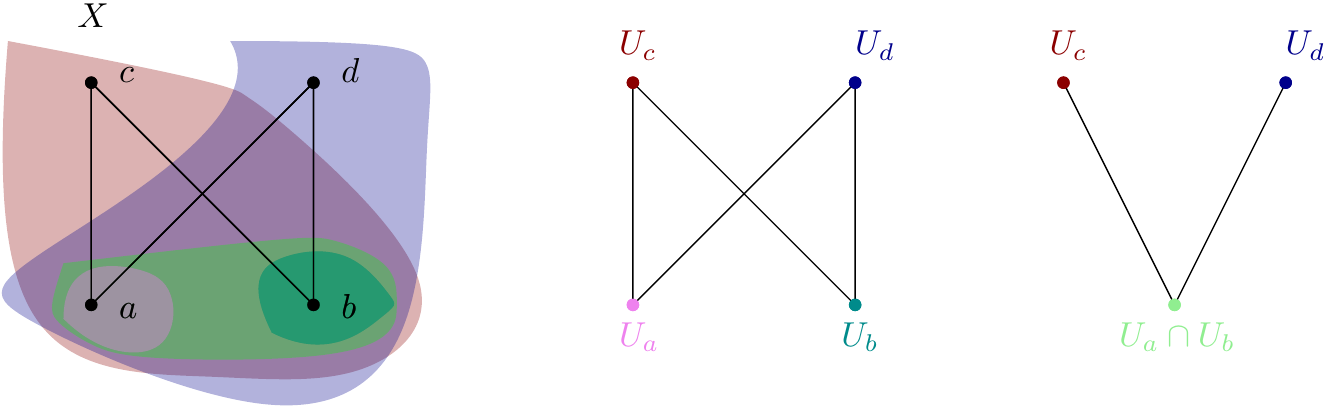}
\caption{Hasse diagrams of $X$, $\mathbb{B}(X)$ and $\mathcal{U}$, and schematic representation of the open sets of $\mathbb{B}(X)$ and $\mathcal{U}$ in the Hasse diagram of $X$.}\label{fig:esquemaNorefina}

\end{figure}
\end{ex}

%For the finite case, we have that every $\mathcal{U}\in Cov(X)\setminus X$ refines every $\mathcal{V}\in Cov(X)$ in the usual sense, where $X$ as a cover denotes the trivial cover. If $x\in X$ is a maximal point, it is clear that every open cover should contain an open cover $U$ containing $U_x$. From here, it is easy to deduce the statement made at the beginning of the paragraph. Thus, every open cover in $Cov(X)\setminus X$ is cofinal in $Cov(X)$ with the usual notion of refinement. On the other hand, for two open covers in $Cov(X)\setminus X$ we get that they do not have necessarily the same homotopy type as finite $T_0$ topological spaces. Hence, we would make an election of the cofinal element not necessarily unique in this context.

Example \ref{example:s1covers} shows that for the finite case it does not make sense to consider a cofinal element in $Cov(X)$ with the usual notion of refinement. This is the main reason why we define the relation $\leq_C$. Moreover, for compact Hausdorff spaces the relation $\leq_C$ fits well with $\overline{Cov}(X)$, which is a cofinal subset in $Cov(X)$ with the usual notion of refinement. In this case, it makes sense to consider a cofinal subset because we are not making a choice of an element that determines $Cov(X)$. This is due to the cardinality of $Cov(X)$. For the finite case $Cov(X)$ is a finite set, i.e., we do not have enough covers, while for the non-finite case the cardinality of $Cov(X)$ is infinite. Therefore, for non-finite topological spaces in the remainder of this section we assume that $Cov(X)=\overline{Cov}(X)$.  
\begin{rem}\label{rem:BondingBienDefinidas} Let $X$ be a compact Hausdorff space and  $\mathcal{U},\mathcal{V}\in Cov(X)$. If $\mathcal{U}$ refines $\mathcal{V}$, then $\mathcal{U}\geq_C \mathcal{V}$ considering $p:\mathcal{U}\rightarrow \mathcal{V}$ given by $p(U)=\bigcap_{U\subseteq V\in\mathcal{V}} V$. By construction, $p$ is well-defined and continuous. Furthermore, if $q:\mathcal{U}\rightarrow\mathcal{V}$ is another map given by the relation $\geq_C$, then it is clear that $p\leq q$, which implies that $p$ is homotopic to $q$.  This result can be seen as an analog of the uniqueness (in homotopy) of the projections in the construction of the \v{C}ech system of a compact Hausdorff space (see \cite[Appendix 1, Section 3, Theorem 5]{mardevsic1982shape}).
\end{rem}
For every compact space $X$ we associate an element in pro-$HM$ given by 

$$\mathcal{M}(X)= ( \mathcal{U},[|\mathcal{K} (p_{\mathcal{U},\mathcal{V}})|],Cov(X)), $$
%\[
 % \mathcal{M}(X) =
  %\begin{cases}
   %                                (\mathcal{U},\mathcal{K} (p),Cov(X)) & \text{if}\ X\ \text{is a finite space }\  \\
  %(\mathcal{U},\mathcal{K} (p),\overline{Cov}(X)) & \text{if}\ X \  \text{is not a finite space}
  %\end{cases}
%\]
where $\mathcal{K}(p_{\mathcal{U},\mathcal{V}}):\mathcal{K}(\mathcal{U})\rightarrow\mathcal{K}( \mathcal{V})$ denotes the induced continuous function given the functor $\mathcal{K}$ and $p_{\mathcal{U},\mathcal{V}}$ is a continuous map from $\mathcal{V}$ to $\mathcal{U}$ given by the relation $\leq_C$. It remains to verify that $\mathcal{M}(X)$ is indeed an element of pro-$HM$.

\begin{lem}\label{lem:elementof}
If $X$ is a compact Hausdorff space or a finite topological space, then $\mathcal{M}(X)$ is an object of pro-$HM$.

\end{lem}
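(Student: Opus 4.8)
The plan is to verify directly that $\mathcal{M}(X)$ meets the definition of an object of pro-$HM$: it must be an inverse system in $HM$ indexed by a directed set. The index set $(Cov(X),\leq_C)$ is directed by Proposition \ref{prop:directedSetFinite} when $X$ is finite and by Lemma \ref{lem:direcetd} when $X$ is compact Hausdorff (where we have agreed that $Cov(X)=\overline{Cov}(X)$). To each $\mathcal{U}\in Cov(X)$ we attach the object $\mathcal{U}$ of $HM$, i.e. the polyhedron $|\mathcal{K}(\mathcal{U})|$, and to each relation $\mathcal{U}\leq_C\mathcal{V}$ the morphism $[|\mathcal{K}(p_{\mathcal{U},\mathcal{V}})|]:\mathcal{V}\to\mathcal{U}$ of $HM$, where $p_{\mathcal{U},\mathcal{V}}:\mathcal{V}\to\mathcal{U}$ is any continuous map provided by the relation $\leq_C$. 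Three things then remain to be checked: (i) the homotopy class $[|\mathcal{K}(p_{\mathcal{U},\mathcal{V}})|]$ is independent of the chosen bonding map $p_{\mathcal{U},\mathcal{V}}$; (ii) $[|\mathcal{K}(p_{\mathcal{U},\mathcal{U}})|]=\mathrm{id}_{\mathcal{U}}$; and (iii) $[|\mathcal{K}(p_{\mathcal{U},\mathcal{W}})|]=[|\mathcal{K}(p_{\mathcal{U},\mathcal{V}})|]\circ[|\mathcal{K}(p_{\mathcal{V},\mathcal{W}})|]$ whenever $\mathcal{U}\leq_C\mathcal{V}\leq_C\mathcal{W}$.

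Granting (i), both (ii) and (iii) are routine. For (ii) one takes $p_{\mathcal{U},\mathcal{U}}=\mathrm{id}_{\mathcal{U}}$, which realizes $\mathcal{U}\leq_C\mathcal{U}$, and notes $|\mathcal{K}(\mathrm{id}_{\mathcal{U}})|=\mathrm{id}$. For (iii), if $p_{\mathcal{U},\mathcal{V}}:\mathcal{V}\to\mathcal{U}$ and $p_{\mathcal{V},\mathcal{W}}:\mathcal{W}\to\mathcal{V}$ are bonding maps, then $p_{\mathcal{U},\mathcal{V}}\circ p_{\mathcal{V},\mathcal{W}}:\mathcal{W}\to\mathcal{U}$ is order-preserving and satisfies $W\subseteq p_{\mathcal{V},\mathcal{W}}(W)\subseteq p_{\mathcal{U},\mathcal{V}}(p_{\mathcal{V},\mathcal{W}}(W))$ for every $W\in\mathcal{W}$; since $\mathcal{W}$ also refines $\mathcal{U}$, this composite is itself a bonding map for $\mathcal{W}\geq_C\mathcal{U}$. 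Using that $\mathcal{K}$ and geometric realization are functors, (i) then gives $[|\mathcal{K}(p_{\mathcal{U},\mathcal{W}})|]=[|\mathcal{K}(p_{\mathcal{U},\mathcal{V}}\circ p_{\mathcal{V},\mathcal{W}})|]=[|\mathcal{K}(p_{\mathcal{U},\mathcal{V}})|]\circ[|\mathcal{K}(p_{\mathcal{V},\mathcal{W}})|]$.

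Everything therefore hinges on (i). My first move would be to establish the following lemma: if $h_0,h_1:Z\to Z'$ are homotopic continuous maps of finite $T_0$-spaces, then $|\mathcal{K}(h_0)|$ and $|\mathcal{K}(h_1)|$ are homotopic. For this, note that the weak homotopy equivalence $f_Z:|\mathcal{K}(Z)|\to Z$ of Theorem \ref{thm:McCord} is natural: for every continuous $h:Z\to Z'$ we have $f_{Z'}\circ|\mathcal{K}(h)|=h\circ f_Z$, which one checks by evaluating both sides on a point $u$ with carrier $(x_0<\dots<x_r)$, obtaining $h(x_0)$ in each case. Hence $f_{Z'}\circ|\mathcal{K}(h_0)|=h_0\circ f_Z\simeq h_1\circ f_Z=f_{Z'}\circ|\mathcal{K}(h_1)|$, and since $f_{Z'}$ is a weak homotopy equivalence while $|\mathcal{K}(Z)|$ is a polyhedron, composition with $f_{Z'}$ induces a bijection from the homotopy classes of maps $|\mathcal{K}(Z)|\to|\mathcal{K}(Z')|$ onto those of maps $|\mathcal{K}(Z)|\to Z'$; this forces $|\mathcal{K}(h_0)|\simeq|\mathcal{K}(h_1)|$. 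With this lemma, (i) reduces to showing that any two bonding maps $p,q:\mathcal{V}\to\mathcal{U}$ attached to $\mathcal{V}\geq_C\mathcal{U}$ are homotopic as maps of finite spaces. When $X$ is compact Hausdorff this is precisely Remark \ref{rem:BondingBienDefinidas}. When $X$ is finite I would argue along the lines of Proposition \ref{prop:cofinal}: for every $V\in\mathcal{V}$ the open set $p(V)\cap q(V)$ contains $V$, hence is non-empty, so by the basis-like property of $\mathcal{U}$ each of its points lies in a member of $\mathcal{U}$ contained in $p(V)\cap q(V)$; such a member is a common lower bound of $p(V)$ and $q(V)$ in the poset $\mathcal{U}$. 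It then remains to organize these pointwise common lower bounds into a single order-preserving map $r:\mathcal{V}\to\mathcal{U}$ with $r\leq p$ and $r\leq q$, after which Proposition \ref{prop:characterizationHomotopy} yields $p\simeq r\simeq q$, and the lemma finishes (i).

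I expect this final step --- producing the coherent common lower bound $r$ in the finite case --- to be the main obstacle, because $\mathcal{U}$ need not be a meet-semilattice, so the choices cannot simply be made pointwise; the construction should exploit the basis-like property of both $\mathcal{U}$ and $\mathcal{V}$ together with the refinement $\mathcal{V}\geq_C\mathcal{U}$, much as in the proof of Proposition \ref{prop:cofinal}, perhaps by first reducing to the maximal cover $\mathbb{B}(X)$. Once (i) is secured, the remaining verifications are bookkeeping and $\mathcal{M}(X)$ is an object of pro-$HM$.
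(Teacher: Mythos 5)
Your treatment of the compact Hausdorff case is correct and follows essentially the same route as the paper: directedness comes from Lemma \ref{lem:direcetd}, uniqueness up to homotopy of the bonding maps is Remark \ref{rem:BondingBienDefinidas}, and the reduction of the identity and composition axioms to the independence statement (i) is sound. Your auxiliary lemma --- that homotopic maps of finite spaces induce homotopic maps on geometric realizations, proved via naturality of McCord's map $f_Z$ and the bijection on homotopy classes induced by a weak equivalence out of a CW source --- is a correct and welcome justification of a step the paper leaves implicit when it passes from $p_{\mathcal{U},\mathcal{W}}\simeq p_{\mathcal{U},\mathcal{V}}\circ p_{\mathcal{V},\mathcal{W}}$ to the corresponding statement for $|\mathcal{K}(\cdot)|$.

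The genuine gap is in the finite case, and you have located it yourself without closing it: you never construct the order-preserving map $r:\mathcal{V}\rightarrow\mathcal{U}$ with $r\leq p$ and $r\leq q$. The pointwise common lower bounds supplied by the basis-like property cannot simply be assembled, because a basis-like cover need not be a meet-semilattice; already in the minimal circle $X=\{a,b,c,d\}$ the elements $U_c,U_d\in\mathbb{B}(X)$ have the two incomparable lower bounds $U_a$ and $U_b$ and no greatest one, so the phrase ``organize these pointwise common lower bounds into a single order-preserving map'' is exactly where an argument is required and none is given. The paper does not prove your statement (i) for arbitrary pairs in the finite case at all; it takes a different and much shorter route: by Proposition \ref{prop:cofinal}, $\mathbb{B}(X)\geq_C\mathcal{U}$ for every $\mathcal{U}\in Cov(X)$, so $(Cov(X),\leq_C)$ has a greatest element and $\mathcal{M}(X)$ is a rudimentary system, determined in pro-$HM$ by the single term $\mathbb{B}(X)$; no coherence over arbitrary triples and no uniqueness of bonding maps between arbitrary pairs then needs to be checked. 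To repair your proof, replace the unfinished construction of $r$ by this cofinality observation (restrict the index set to the cofinal subset $\{\mathbb{B}(X)\}$). As written, step (i) --- and hence the lemma --- is not established for finite $X$.
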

\begin{proof}
In Lemma \ref{lem:direcetd}, it is shown that $Cov(X)$ with the relation $\leq_C$ is a directed set. It only remains to show that for every $\mathcal{U},\mathcal{V},\mathcal{W} \in Cov(X)$ satisfying that $\mathcal{U}\leq_C \mathcal{V}$ and $\mathcal{V}\leq_C\mathcal{W}$ we get $\mathcal{K} (p_{\mathcal{U},\mathcal{V}})\circ \mathcal{K}( p_{\mathcal{V},\mathcal{W}}) \simeq \mathcal{K}(p_{\mathcal{U},\mathcal{W}})$.
%\[
 % \begin{tikzcd}
%\mathcal{U} & \mathcal{V} \arrow[l, "\mathcal{K} (p_{\mathcal{U},\mathcal{V}})"']   \\
% &  \mathcal{W}  \arrow{ul}{\mathcal{K}(p_{\mathcal{U},\mathcal{W}})} \arrow[u,"\mathcal{K}( p_{\mathcal{V},\mathcal{W}})"']
%  \end{tikzcd}
%\]

We study the non-finite case. By the definition of $\leq_C$ and Remark \ref{rem:BondingBienDefinidas} it is obvious that $p_{\mathcal{U},\mathcal{W}}(W) \subseteq p_{\mathcal{U},\mathcal{V}}(p_{\mathcal{V},\mathcal{W}} (W))$ for every $W\in \mathcal{W}$. Therefore, $p_{\mathcal{U},\mathcal{W}}\simeq p_{\mathcal{U},\mathcal{V}} \circ p_{\mathcal{V},\mathcal{W}}$ by Proposition \ref{prop:characterizationHomotopy}, which implies the desired result. 

We prove the result for the finite case. By Proposition \ref{prop:cofinal2} it follows that $\mathbb{B}(X)$ is cofinal in $Cov(X)$. Thus, $\mathcal{M}(X)$ is a rudimentary system.
\end{proof}
Applying the functor $\mathcal{K}$, $\mathcal{M}(X)$ can also be seen as an element of $pro$-$HPol$. Let $\mathcal{K}(X)$ denote the inverse system given by
$$ \mathcal{K}(X)=(  |\mathcal{K}(\mathcal{U})|,[|\mathcal{K} (p_{\mathcal{U},\mathcal{V}})|],Cov(X) ). $$
\begin{prop}\label{prop:hpol} If $X$ is a compact Hausdorff space, then $\mathcal{K}(X)$ is a $HPol$-expansion.

\end{prop}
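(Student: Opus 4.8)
The plan is to show that $\mathcal{K}(X)$ is an $HPol$-expansion of $X$ by comparing it with the \v{C}ech system $C(X)$, which is the canonical $HPol$-expansion of a compact Hausdorff space. The key observation is that the index set $Cov(X) = \overline{Cov}(X)$ with the usual refinement relation is cofinal in the directed set $\Lambda$ of all normal open covers of $X$, and that on $\overline{Cov}(X)$ the relation $\geq_C$ agrees (as a preorder, up to the homotopy uniqueness of bonding maps recorded in Remark \ref{rem:BondingBienDefinidas}) with ordinary refinement. So the two systems share essentially the same index set; what must be compared are the polyhedra and bonding maps.

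First I would recall that for $\mathcal{U}\in\overline{Cov}(X)$, the nerve $N(\mathcal{U})$ and the order complex $\mathcal{K}(\mathcal{U})$ of the poset $(\mathcal{U},\subseteq)$ are related: since $\mathcal{U}$ is closed under finite intersections, $\mathcal{U}$ is a basis-like cover whose associated poset has the property that $U_0,\dots,U_n$ spanning a simplex of $N(\mathcal{U})$ (i.e. $U_0\cap\cdots\cap U_n\neq\emptyset$) contains, via that intersection, an element of $\mathcal{U}$ below all of them. The standard comparison (as in McCord's argument, Theorem \ref{thm:McCord}, via the basis-like cover $\mathbb{B}$, or the nerve-theorem circle of ideas) gives a natural weak homotopy equivalence — in fact a homotopy equivalence after geometric realization — between $|\mathcal{K}(\mathcal{U})|$ and $N(\mathcal{U})$, compatible up to homotopy with the projections. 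Concretely, $\mathcal{K}(\mathcal{U})$ is the barycentric-type subdivision situated so that there is a canonical simplicial map $|\mathcal{K}(\mathcal{U})|\to N(\mathcal{U})$ sending a flag $U_0\subsetneq\cdots\subsetneq U_r$ to its top vertex $U_r$, and this is a homotopy equivalence because $\overline{Cov}(X)$ is closed under intersections, so every nonempty intersection corresponds to a cover element and the relevant subcomplexes are cones. I would then check this family of equivalences is compatible with the bonding maps $\mathcal{K}(p_{\mathcal{U},\mathcal{V}})$ on one side and the \v{C}ech projections $p_{\mathcal{U},\mathcal{V}}$ on the other, up to homotopy — which follows from Remark \ref{rem:BondingBienDefinidas} together with the homotopy-uniqueness of nerve projections cited from \cite[Appendix 1, Section 3, Theorem 5]{mardevsic1982shape}.

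Having produced a level-preserving (cofinal-index) isomorphism in pro-$HPol$ between $\mathcal{K}(X)$ and (a cofinal subsystem of) the \v{C}ech system $C(X)$, I would invoke the fact that $C(X)$ is an $HPol$-expansion of $X$ for every compact Hausdorff $X$ \cite[Chapter 1]{mardevsic1982shape}, that cofinal restriction of an expansion is again an expansion, and that a system isomorphic in pro-$HPol$ to an expansion is itself an expansion. This yields that $\mathcal{K}(X)$ is an $HPol$-expansion of $X$.

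The main obstacle I expect is the compatibility of the comparison equivalences $|\mathcal{K}(\mathcal{U})|\to N(\mathcal{U})$ with the two families of bonding maps: $\mathcal{K}(p_{\mathcal{U},\mathcal{V}})$ is induced from the poset map $p_{\mathcal{U},\mathcal{V}}(V)=\bigcap_{V\subseteq U}U$, whereas the \v{C}ech projection is a (non-canonical) vertex choice $V\mapsto U\supseteq V$; these agree only up to homotopy, and one must verify the resulting square of maps in $HPol$ commutes — here precisely the homotopy-uniqueness statements of Remark \ref{rem:BondingBienDefinidas} and \cite[Appendix 1, Section 3, Theorem 5]{mardevsic1982shape} do the work, but assembling them into a genuine pro-$HPol$ isomorphism (rather than just a levelwise homotopy equivalence) requires a little care with the Morita-type criterion for maps of inverse systems. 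A secondary point to handle cleanly is that we have replaced $Cov(X)$ by $\overline{Cov}(X)$ and $\leq_C$ by ordinary refinement on it; one should note this does not change the pro-object up to isomorphism, again by cofinality.
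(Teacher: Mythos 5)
Your proposal follows essentially the same route as the paper: compare $\mathcal{K}(X)$ with the \v{C}ech system $(|N(\mathcal{U})|,N(p_{\mathcal{U},\mathcal{V}}),\overline{Cov}(X))$, use the homotopy-uniqueness of projections to match the bonding maps, and assemble the levelwise equivalences into a pro-$HPol$ isomorphism via Morita's lemma. The one correction: the comparison map is not a ``top vertex'' map on flags but simply the inclusion $\mathcal{K}(\mathcal{U})\subseteq N(\mathcal{U})$ (every chain $U_0\subsetneq\cdots\subsetneq U_r$ has nonempty intersection $U_0$, so the order complex is a subcomplex of the nerve), which is a deformation retract by \cite[Lemma 3]{mccord1967homotopy}.
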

\begin{proof}
We prove the result showing that $\mathcal{K}(X)$ is isomorphic to the  \v{C}ech system $( |N(\mathcal{U})|,$ $N(p_{\mathcal{U},\mathcal{V}}), \overline{Cov}(X) )$, which is a $HPol$-expansion of $X$. A detailed description about \v{C}ech systems for compact Hausdorff spaces can be found in \cite[Appendix 1.3]{mardevsic1982shape}.

Firstly, we study the map $N(p_{\mathcal{U},\mathcal{V}})$. We have that $p_{\mathcal{U},\mathcal{V}}:\mathcal{V}\rightarrow \mathcal{U}$ satisfies $V\subseteq p(V)$ for every $V\in \mathcal{V}$. Then, $p_{\mathcal{U},\mathcal{V}}$ induces a map between the nerves, i.e., $N(p):|N(\mathcal{V})|\rightarrow |N(\mathcal{U})|$. By construction, $\mathcal{K}(\mathcal{U})\subseteq N(\mathcal{U})$. Moreover, $|\mathcal{K}(\mathcal{U})|$ is a deformation retract of $|N(\mathcal{U})|$ by  \cite[Lemma 3]{mccord1967homotopy}. Furthermore, the set of vertices of  $\mathcal{K}(\mathcal{U})$ and $ N(\mathcal{U})$ are the same. We have that $p$ can be taken satisfying that $N(p)$ and $\mathcal{K}(p)$ are the same map over the set of vertices. In addition, in \cite[Appendix 1.3]{mardevsic1982shape}, it is proved that all choices of $N(p)$ are homotopic. Thus, we have $\mathcal{K}(p)= N(p)_{|\mathcal{K}(\mathcal{U})}$. 

We also have a level morphism between the two inverse systems given by the natural inclusion $i$. We will use Morita's lemma (see \cite{morita1974hurewicz} or \cite[Chapter 2, Theorem 5]{mardevsic1982shape}) to conclude the proof.

\[
  \begin{tikzcd}[row sep=huge,column sep=huge]
\mathcal{K}(\mathcal{U}) \arrow[d,"i"] & \mathcal{K}(\mathcal{V}) \arrow[l,"\mathcal{K}(p_{\mathcal{U},\mathcal{V}})"] \arrow[d,"i"'] \\
N(\mathcal{U}) & N(\mathcal{V}) \arrow[l,"N(p_{\mathcal{U},\mathcal{V}})"] \arrow[ul, "g_\mathcal{U}"]
  \end{tikzcd}
\]
We consider $g_\mathcal{U}=r \circ N(p)$, where $r:|N(\mathcal{U})|\rightarrow |\mathcal{K}(\mathcal{U})|$ denotes the retraction considered before. The commutative up to homotopy of the above diagram follows trivially.
\end{proof}

We define the category $SW$, its objects are compact spaces and if $X,Y\in Obj(\mathcal{S}\mathcal{W})$, then $SW(X,Y)=\{ f:\mathcal{M}(X) \rightarrow \mathcal{M}(Y)|$ $f$ is a morphism in pro-$HM \}$. 

\begin{thm}\label{thm:classifyFinite}
Let $X$ and $Y$ be finite topological spaces. Then $X$ is weak homotopy equivalent to $Y$ if and only if $X$ is isomorphic to $Y$ in $SW$. 
\end{thm}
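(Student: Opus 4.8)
The plan is to exploit the fact, established in Lemma~\ref{lem:elementof} and its proof, that for a finite topological space $X$ the system $\mathcal{M}(X)$ is \emph{rudimentary}: since $\mathbb{B}(X)$ is cofinal in $(Cov(X),\leq_C)$ (Propositions~\ref{prop:cofinal} and \ref{prop:cofinal2}), the pro-$HM$ object $\mathcal{M}(X)$ is isomorphic in pro-$HM$ to the single object $\mathbb{B}(X)$. Moreover, by Remark~\ref{rem:coverhomeo}, $\mathbb{B}(X)$ is homeomorphic to $X$ itself, so in fact $\mathcal{M}(X)\cong X$ in pro-$HM$, where $X$ is viewed as a one-term (rudimentary) system. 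Hence for finite $X,Y$ we have the chain of natural bijections
$$SW(X,Y)=\operatorname{pro-}HM(\mathcal{M}(X),\mathcal{M}(Y))\cong \operatorname{pro-}HM(X,Y)\cong HM(X,Y),$$
the last step being the standard fact that the Hom-set out of a rudimentary system into a rudimentary system is just the Hom-set in the original category. Consequently $X\cong Y$ in $SW$ if and only if $X\cong Y$ in $HM$.

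It then remains to identify isomorphism in $HM$ with weak homotopy equivalence. By definition $HM(X,Y)$ consists of homotopy classes of continuous maps $|\mathcal{K}(X)|\to|\mathcal{K}(Y)|$, so $X\cong Y$ in $HM$ precisely when $|\mathcal{K}(X)|$ and $|\mathcal{K}(Y)|$ are homotopy equivalent polyhedra. By McCord's Theorem~\ref{thm:McCord} there are weak homotopy equivalences $f_X:|\mathcal{K}(X)|\to X$ and $f_Y:|\mathcal{K}(Y)|\to Y$; since weak homotopy equivalence between CW-complexes (here, polyhedra) is the same as homotopy equivalence by the Whitehead theorem, $|\mathcal{K}(X)|\simeq|\mathcal{K}(Y)|$ is equivalent to $|\mathcal{K}(X)|$ and $|\mathcal{K}(Y)|$ being weak homotopy equivalent, which via $f_X$ and $f_Y$ is equivalent to $X$ and $Y$ being weak homotopy equivalent. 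This is exactly the content already flagged in the Remark following the definition of $HM$, that $HM$ classifies finite spaces by weak homotopy type; I would spell this argument out here.

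I would organize the write-up in two steps: first, the reduction $SW(X,Y)\cong HM(X,Y)$ for finite $X,Y$ via the rudimentary-system/cofinality observation, being careful to note that an isomorphism in $SW$ corresponds under this bijection to an isomorphism in $HM$ (functoriality of the identification $\mathcal{M}(X)\cong X$ in pro-$HM$ in $X$); second, the translation of $HM$-isomorphism into weak homotopy equivalence through McCord's theorem and Whitehead's theorem. The main obstacle is the first step: one must verify carefully that the cofinality of $\mathbb{B}(X)$ with respect to $\leq_C$ genuinely yields an \emph{isomorphism} of pro-objects $\mathcal{M}(X)\cong\mathbb{B}(X)$ (not merely that $\mathbb{B}(X)$ is a subsystem), and that this isomorphism is natural enough that a pro-$HM$ isomorphism $\mathcal{M}(X)\to\mathcal{M}(Y)$ descends to a genuine $HM$-isomorphism $\mathbb{B}(X)\to\mathbb{B}(Y)$, hence $X\to Y$; this uses the standard description of morphisms of pro-objects with cofinal subsystems together with Proposition~\ref{prop:cofinal2} to control the homotopy type. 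Once that bookkeeping is done, the remaining implications are immediate from the cited theorems.
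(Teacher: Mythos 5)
Your proposal is correct and follows essentially the same route as the paper: reduce the $SW$-isomorphism to a homotopy equivalence $|\mathcal{K}(\mathbb{B}(X))|\simeq|\mathcal{K}(\mathbb{B}(Y))|$ via the rudimentary/cofinal-system observation, then pass between this and weak homotopy equivalence of $X$ and $Y$ using McCord's Theorem~\ref{thm:McCord} and Whitehead's theorem. You merely make explicit the identification $SW(X,Y)\cong HM(X,Y)$ that the paper leaves implicit in the sentence ``By hypothesis, $|\mathcal{K}(\mathbb{B}(X))|$ and $|\mathcal{K}(\mathbb{B}(Y))|$ are homotopy equivalent.''
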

\begin{proof} Suppose $X$ and $Y$ are finite topological spaces that are isomorphic in $SW$. We prove that $X$ and $Y$ are weak homotopy equivalent, i.e., there exists a $CW$-complex $Z$ and weak homotopy equivalences $Z\rightarrow X$, $Z\rightarrow Y$ (\cite[Proposition 4.13 and Corollary 4.19 ]{hatcher2000algebraic}). % By Proposition \ref{prop:cofinal}, we know that $(\mathcal{U},p_{\mathcal{U},\mathcal{V}},Cov(X))$ is isomorphic in pro-$H\mathcal{M}$ to the rudimentary system $(\mathbb{B}(X))$ because $\mathbb{B}(X)$ is cofinal in $Cov(X)$. The same situation holds for $Y$. 
By hypothesis, $|\mathcal{K}(\mathbb{B}(X))|$ and $|\mathcal{K}(\mathbb{B}(Y))|$ are homotopy equivalent. In addition, by Theorem \ref{thm:McCord} there exist weak homotopy equivalences $|\mathcal{K}(\mathbb{B}(X))|\rightarrow X$, $|\mathcal{K}(\mathbb{B}(Y))|\rightarrow Y$. From here, we deduce the desired result. Now, we prove the opposite, so let us assume that $X$ is weak homotopy equivalent to $Y$. Hence, $|\mathcal{K}(\mathbb{B}(X))|=|\mathcal{K}(X)|$ and $|\mathcal{K}(\mathbb{B}(Y))|=|\mathcal{K}(Y)|$ are also weak homotopy equivalent. By \cite{hatcher2000algebraic}, there exists a CW-complex $Z$ and weak homotopy equivalences $|\mathcal{K}(X)| \leftarrow Z\rightarrow |\mathcal{K}(Y)|$. By a well-known theorem of Whitehead a weak homotopy equivalence between connected CW-complexes is a homotopy equivalence, therefore, we have that the previous weak homotopy equivalences are indeed homotopy equivalences. Then, $|\mathcal{K}(\mathbb{B}(X))|$ is homotopy equivalent to $|\mathcal{K}(\mathbb{B}(Y))|$, so $X$ is isomorphic to $Y$ in $SW$.
\end{proof}
\begin{rem}
It is a simple matter to check that in $SW$ polyhedra are isomorphic to finite topological spaces.
\end{rem}

\begin{thm} 
Let $X$ and $Y$ be compact Hausdorff spaces. Then $X$ and $Y$ have the same shape if and only if $X$ and $Y$ are isomorphic in $SW$.
\end{thm}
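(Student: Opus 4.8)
The plan is to reduce the statement to Proposition \ref{prop:hpol} together with the observation that the construction of $SW$ is, after passing to geometric realizations, nothing but a full embedding into $\mathrm{pro}$-$HPol$. Recall first that $X$ and $Y$ are isomorphic in $SW$ precisely when $\mathcal{M}(X)$ and $\mathcal{M}(Y)$ are isomorphic in $\mathrm{pro}$-$HM$. Consider the functor $\iota\colon HM\rightarrow HPol$ given on objects by $\iota(X)=|\mathcal{K}(X)|$ and on a homotopy class $[f]$ by $\iota([f])=[f]$; this is well defined because $M(X,Y)$ is by construction the set of continuous maps $|\mathcal{K}(X)|\rightarrow|\mathcal{K}(Y)|$ and $|\mathcal{K}(X)|$ is a compact polyhedron. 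For the same reason $\iota$ is full and faithful: for any two finite spaces the induced map $HM(X,Y)\rightarrow HPol(|\mathcal{K}(X)|,|\mathcal{K}(Y)|)$ is the identity on homotopy classes of maps $|\mathcal{K}(X)|\rightarrow|\mathcal{K}(Y)|$. Thus $\iota$ realizes $HM$ as a full subcategory of $HPol$.

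Next, a full and faithful functor $\mathcal{D}\hookrightarrow\mathcal{C}$ induces a full and faithful functor $\mathrm{pro}$-$\mathcal{D}\hookrightarrow\mathrm{pro}$-$\mathcal{C}$: morphism sets in a pro-category are computed as the inverse limit over the target index set of the direct limit over the source index set of the hom-sets $\mathcal{C}(X_j,Y_i)$, and these coincide with $\mathcal{D}(X_j,Y_i)$ when $\mathcal{D}$ is full in $\mathcal{C}$. Applying $\mathrm{pro}\text{-}\iota$ to $\mathcal{M}(X)=(\mathcal{U},[|\mathcal{K}(p_{\mathcal{U},\mathcal{V}})|],Cov(X))$ yields exactly the inverse system $\mathcal{K}(X)=(|\mathcal{K}(\mathcal{U})|,[|\mathcal{K}(p_{\mathcal{U},\mathcal{V}})|],Cov(X))$ introduced before Proposition \ref{prop:hpol}, with the same index set $Cov(X)=\overline{Cov}(X)$ and the same bonding homotopy classes. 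Consequently
$$X\cong Y\text{ in }SW\iff\mathcal{M}(X)\cong\mathcal{M}(Y)\text{ in }\mathrm{pro}\text{-}HM\iff\mathcal{K}(X)\cong\mathcal{K}(Y)\text{ in }\mathrm{pro}\text{-}HPol.$$

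Finally, by Proposition \ref{prop:hpol} the systems $\mathcal{K}(X)$ and $\mathcal{K}(Y)$ are $HPol$-expansions of $X$ and $Y$. Since any two $HPol$-expansions of a space are mutually isomorphic in $\mathrm{pro}$-$HPol$, and since, by the definition of the shape category (see \cite[Chapter 1, Section 3]{mardevsic1982shape}), two spaces have the same shape if and only if their $HPol$-expansions are isomorphic in $\mathrm{pro}$-$HPol$, we obtain that $X$ and $Y$ have the same shape if and only if $\mathcal{K}(X)\cong\mathcal{K}(Y)$ in $\mathrm{pro}$-$HPol$, i.e.\ if and only if $X\cong Y$ in $SW$. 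In fact the same reasoning gives a natural bijection $SW(X,Y)\cong Sh(X,Y)$.

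The step I expect to be the main obstacle is making the reduction in the second paragraph genuinely rigorous: one must verify carefully that $\iota$ is full and faithful — so that nothing is lost or gained when passing from $M$ and $HM$ (whose objects are finite posets but whose morphisms already live between realizations) to $Pol$ and $HPol$ — and that $\mathrm{pro}\text{-}\iota(\mathcal{M}(X))$ really is the \v{C}ech-type system of Proposition \ref{prop:hpol}, matching index sets and bonding classes. Once this bookkeeping is settled the theorem follows formally from Proposition \ref{prop:hpol} and the standard uniqueness of $HPol$-expansions.
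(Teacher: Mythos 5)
Your proof is correct, and on the converse direction it takes a cleaner route than the paper. Both arguments share the forward direction (functoriality sends an isomorphism $\mathcal{M}(X)\cong\mathcal{M}(Y)$ to $\mathcal{K}(X)\cong\mathcal{K}(Y)$, and Proposition \ref{prop:hpol} plus uniqueness of $HPol$-expansions finishes it). For the converse, the paper starts from an isomorphism of arbitrary $HPol$-expansions $(X_\lambda)\cong(Y_\lambda)$, triangulates each $X_\lambda$, applies the face-poset functor $\mathcal{X}$ to manufacture an intermediate object $(\mathcal{X}(X_\lambda),p_{\lambda\lambda'},\Lambda)$ of pro-$HM$ whose realization recovers $(X_\lambda)$ up to barycentric subdivision, and then asserts that this object is isomorphic to $\mathcal{M}(X)$ in pro-$HM$. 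You instead observe that $\iota\colon HM\to HPol$, $\iota(X)=|\mathcal{K}(X)|$, is full and faithful essentially by the definition of $M(X,Y)$, that fully faithful functors induce fully faithful functors on pro-categories (hence reflect isomorphisms), and that $\mathrm{pro}\text{-}\iota(\mathcal{M}(X))=\mathcal{K}(X)$; this transfers the isomorphism $\mathcal{K}(X)\cong\mathcal{K}(Y)$ in pro-$HPol$ back to pro-$HM$ directly. This is a genuine simplification: the paper's step ``$(\mathcal{X}(X_\lambda),p_{\lambda\lambda'},\Lambda)$ is isomorphic to $\mathcal{M}(X)$'' is itself an isomorphism in pro-$HM$ that is only witnessed at the level of realizations in pro-$HPol$, so it tacitly relies on the very lifting property you make explicit. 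Your version isolates that property once, proves it, and dispenses with the auxiliary system altogether; the paper's version has the mild virtue of exhibiting a concrete pro-$HM$ object modelling an arbitrary expansion, but at the cost of leaving the key reflection-of-isomorphisms step implicit.
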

\begin{proof}
Suppose $X$ and $Y$ are compact Hausdorff spaces that are isomorphic in $SW$. Then $\mathcal{K}(X)$ and $\mathcal{K}(Y)$ are $HPol$-expansions of $X$ and $Y$ by Proposition \ref{prop:hpol}. It is clear that $\mathcal{M}(X)\simeq \mathcal{M}(Y)$ implies $\mathcal{K}(X)\simeq \mathcal{K}(Y)$. Thus, $X$ and $Y$ have the same shape. Suppose $X$ and $Y$ are compact Hausdorff spaces that have the same shape. Then, there are two $HPol$-expansions $(X_\lambda, p_{\lambda\lambda'},\Lambda )$ and $(Y_\lambda, p_{\lambda\lambda'},\Lambda)$ satisfying that $(X_\lambda, p_{\lambda\lambda'},\Lambda )$ is isomorphic to $(Y_\lambda, p_{\lambda\lambda'},\Lambda)$. For each polyhedron $X_\lambda$ of $(X_\lambda, p_{\lambda\lambda'},\Lambda)$ we take a triangulation and we apply the functor $\mathcal{X}$ to get a finite topological space. It is clear that $(\mathcal{X}(X_\lambda), p_{\lambda\lambda'},\Lambda))$ is an element of pro-$HM$. In addition, $(|\mathcal{K}(\mathcal{X}(X_\lambda))|,p_{\lambda\lambda'},\Lambda)$ is isomorphic to  $(X_\lambda, p_{\lambda\lambda'},\Lambda)$ in pro-$HPol$ because $\mathcal{K}(\mathcal{X}(X_\lambda))$ is just the barycentric subdivision of the triangulation given to $X_\lambda$, i.e., $|\mathcal{K}(\mathcal{X}(X_\lambda))|=X_\lambda$. In fact, $( \mathcal{X}(X_\lambda), p_{\lambda\lambda'},\Lambda )$ is isomorphic to $\mathcal{M}(X)$ because $(|\mathcal{K}(\mathcal{U})|, \mathcal{K}(p), \overline{Cov}(X) )$ is a $HPol$-expansion of $X$ by Proposition \ref{prop:hpol}. The same situation holds for $Y$. Thus, $\mathcal{M}(X)$ is isomorphic to $\mathcal{M}(Y)$ in pro-$HM$ and we deduce that $X$ is isomorphic to $Y$ in $SW$.
\end{proof}

Let $hm:F\rightarrow HM$ denote the functor given by $hm(X)=X$ and $hm(f)=|\mathcal{K}(f)|$, where $X$ is a finite topological space and $f:X\rightarrow Y$ is a continuous map. Let $sw$ denote the natural functor between $Sh$ and $SW$. This functor induces an isomorphism between the full subcategories of $Sh$ and $SW$ restricted to compact Hausdorff spaces. Let $i:HM\rightarrow SW$ denote the functor given by $i(X)=X$ and $i(f)=f$.   

We summarize the relations between the categories mentioned throughout this note in the following diagram.

\[
  \begin{tikzcd}[row sep=large,column sep=huge]
Poset \simeq F \arrow[dr,"hm"'] \arrow[r, yshift=0.7ex,"\mathcal{K}"]& SimpComp \arrow[l, yshift=-0.7ex,"\mathcal{X}"] \arrow[r,"|\cdot|"] & HPol  \arrow[r,"S"]&  Sh \arrow[r,"sw"] & SW \\
          &   HM   \arrow[rrru,"i"]  &      &     & 
  \end{tikzcd}
\]

\bibliography{bibliografia}

\begin{thebibliography}{10}

\bibitem{alexandroff1937diskrete}
P.~S. Alexandroff.
\newblock Diskrete {R}äume.
\newblock {\em Mathematiceskii Sbornik (N.S.)}, 2(3):501--519, 1937.

\bibitem{moron2008connectedness}
M.~Alonso-Mor\'on, E.~Cuchillo-Ibañez, and A.~Luz\'on.
\newblock $\epsilon$-connectedness, finite approximations, shape theory and
  coarse graining in hyperspaces.
\newblock {\em Phys. D}, 237(23):3109--3122, 2008.

\bibitem{barmak2011algebraic}
J.~A. Barmak.
\newblock {\em Algebraic topology of finite topological spaces and
  applications}, volume 2032.
\newblock Springer, 2011.

\bibitem{barmak2020automorphism2}
J.~A. Barmak.
\newblock Automorphism groups of finite posets {II}.
\newblock {\em Preprint. arXiv:2008.04997}, 2020.

\bibitem{barmak2009automorphism}
J.~A. Barmak and E.G. Minian.
\newblock Automorphism groups of finite posets.
\newblock {\em Discrete Math.}, 309(10):3424--3426, 2009.

\bibitem{barmak2011lefschetz}
J.~A. Barmak, M.~Mrozek, and T.~Wanner.
\newblock A {L}efschetz fixed point theorem for multivalued maps of finite
  spaces.
\newblock {\em Math. Z.}, 294:1477--1497, 2020.

\bibitem{borsuk1948imbedding}
K.~Borsuk.
\newblock On the imbedding of systems of compacta in simplicial complexes.
\newblock {\em Fund. Math.}, 35, 1948.

\bibitem{borsuk1971shape}
K.~Borsuk.
\newblock {\em Theory of {S}hape}, volume Lectures Notes Series 28.
\newblock Matematisk Inst. Aarhus Univ., 1971.

\bibitem{chocano2020coincidence}
P.~J. Chocano, M.~A. Mor\'on, and F.~R. Ruiz~del Portal.
\newblock Coincidence theorems for finite topological spaces.
\newblock {\em arXiv:2010.12804}, 2020.

\bibitem{chocano2020computational}
P.~J. Chocano, M.~A. Mor\'on, and F.~R. Ruiz~del Portal.
\newblock Computational approximations of compact metric spaces.
\newblock {\em arXiv:2107.11271}, 2021.

\bibitem{chocano2021onSome}
P.~J. Chocano, M.~A. Mor\'on, and F.~R. Ruiz~del Portal.
\newblock On some topological realizations of groups and homomorphisms.
\newblock {\em arXiv:2011.07257}, 2021.

\bibitem{cordier1989shape}
J.~M. Cordier and T.~Porter.
\newblock {\em Shape theory: {C}ategorical mehtods of approximation}.
\newblock Horwood Series: Mathematics and Its Applications, 1989.

\bibitem{moron2001finite}
A.~Giraldo, M.~A. Mor\'on, F.~R. Ruiz~del Portal, and J.~M.~R Sanjurjo.
\newblock Finite approximations to {\v{c}}ech homology.
\newblock {\em J. Pure Appl. Algebra}, 163:81--92, 2001.

\bibitem{sanjurjo1997density}
A.~Giraldo and J.~M.~R. Sanjurjo.
\newblock Density and finiteness. {A} discrete approach to shape.
\newblock {\em Topol. App.}, 76:61--77, 1997.

\bibitem{hatcher2000algebraic}
A.~Hatcher.
\newblock {\em Algebraic topology}.
\newblock Cambridge Univ. Press, Cambridge, 2000.

\bibitem{lipinski2019conley}
M.~Lipiński, J.~Kubica, M.~Mrozek, and T.~Wanner.
\newblock Conley–{M}orse–{F}orman theory for generalized combinatorial
  multivector fields on finite topological spaces.
\newblock {\em arXiv:1911.12698}, 2019.

\bibitem{mardevsic1982shape}
S.~Marde{\v{s}}i{\'c} and J.~Segal.
\newblock {\em Shape theory: the inverse system approach}.
\newblock North-Holland Mathematical Library, 1982.

\bibitem{may1966finite}
J.~P. May.
\newblock Finite spaces and larger contexts.
\newblock {\em Unpublished book}, 2016.

\bibitem{mccord1966singular}
M.~C. McCord.
\newblock Singular homology groups and homotopy groups of finite topological
  spaces.
\newblock {\em Duke Math. J.}, 33(3):465--474, 1966.

\bibitem{mccord1967homotopy}
M.~C. McCord.
\newblock Homotopy {T}ype {C}omparison of a {S}pace with {C}omplexes
  {A}ssociated with its {O}pen {C}overs.
\newblock {\em Proc. Amer. Math. Soc.}, 18(4):705--708, 1967.

\bibitem{mondejar2020reconstruction}
D.~Mondéjar and M.~A. Morón.
\newblock Reconstruction of compacta by finite approximation and inverse
  persistence.
\newblock {\em Rev. Mat. Complut.}, https://doi.org/10.1007/s13163-020-00356-w,
  2020.

\bibitem{morita1974hurewicz}
K.~Morita.
\newblock The {H}urewicz isomorphism theorem on homotopy and homology
  pro-groups.
\newblock {\em Proc. Japan Acad.}, 50(7):453--457, 1974.

\bibitem{prasolov2010quasi}
A.~I. Prasolov.
\newblock Quasi-shape theory of locally finite and paracompact spaces.
\newblock {\em arXiv:1012.5767}, 2010.

\bibitem{gabites2008dynamical}
J.~J. S\'anchez-Gabites.
\newblock Dynamical systems and shapes.
\newblock {\em Rev. R. Acad. Cien. Serie A. Mat.}, 102:127–159, 2008.

\bibitem{sanjurjo1992intrinsic}
J.~M.~R. Sanjurjo.
\newblock An {I}ntrinsic {D}escription of {S}hape.
\newblock {\em Trans. Amer. Math. Soc.}, 329(2):625--636, 1992.

\bibitem{stong1966finite}
R.~E. Stong.
\newblock Finite topological spaces.
\newblock {\em Trans. Amer. Math. Soc.}, 123(2):325--340, 1966.

\end{thebibliography}
\bibliographystyle{plain}

\newcommand{\Addresses}{{% additional braces for segregating \footnotesize
  \bigskip
  \footnotesize

  \textsc{ P.J. Chocano, Departamento de Matemática Aplicada,
Ciencia e Ingeniería de los Materiales y
Tecnología Electrónica, ESCET
Universidad Rey Juan Carlos, 28933
Móstoles (Madrid), Spain}\par\nopagebreak
  \textit{E-mail address}:\texttt{pedro.chocano@urjc.es}
  
   \medskip
   
\textsc{ M. A. Mor\'on,  Departamento de \'Algebra, Geometr\'ia y Topolog\'ia, Universidad Complutense de Madrid and Instituto de
Matematica Interdisciplinar, Plaza de Ciencias 3, 28040 Madrid, Spain}\par\nopagebreak
  \textit{E-mail address}: \texttt{ma\_moron@mat.ucm.es}

  \medskip

\textsc{ F. R. Ruiz del Portal,  Departamento de \'Algebra, Geometr\'ia y Topolog\'ia, Universidad Complutense de Madrid and Instituto de
Matematica Interdisciplinar
, Plaza de Ciencias 3, 28040 Madrid, Spain}\par\nopagebreak
  \textit{E-mail address}: \texttt{R\_Portal@mat.ucm.es}

}}

\Addresses

\end{document}